\documentclass[article,12pt]{amsart}

\usepackage
[
	a4paper,
	left=1in,
	right=1in,
	top=1in,
	bottom=1in,
]
{geometry}

\usepackage{setspace}

\usepackage{euscript}
\usepackage{amsfonts}
\usepackage{amssymb}
\usepackage{amsmath}
\usepackage{amscd}
\usepackage{amsthm}
\usepackage{enumerate}
\usepackage{hyperref}
\usepackage{fullpage}
\usepackage{algorithm}
\usepackage{algpseudocode}
\usepackage{float}
\usepackage{graphicx}
\usepackage{xcolor}
\usepackage{comment}
\usepackage[shortlabels]{enumitem}
\usepackage{appendix}

\usepackage{amsfonts}
\usepackage{textcomp}
\usepackage{amssymb}
\usepackage{amsmath}
\usepackage{amsthm}
\usepackage{color}
\usepackage{ulem}
\usepackage{fullpage}
\usepackage{enumerate}
\usepackage{enumitem}
\usepackage{mathtools}
\usepackage{interval}


\usepackage{amsthm}
\usepackage{enumerate}
\usepackage{mathtools}
\usepackage{amsmath}
\usepackage{amssymb}
\usepackage{amsfonts}
\usepackage{interval}



\theoremstyle{plain}
\newtheorem{thm}{Theorem}[section]

\newtheorem{prop}[thm]{Proposition}

\numberwithin{equation}{section}


\title[Strong existence and uniqueness]{Strong existence and uniqueness of a calibrated local stochastic volatility model}

\author{Scander Mustapha}
\address{Program in Applied \& Computational Mathematics, Princeton University, Princeton, NJ 08544.}
\email{mustapha@princeton.edu}

\date{May 2024}

\begin{document}
\begin{abstract}
	We study a two-dimensional McKean-Vlasov stochastic differential equation, whose volatility coefficient depends on the conditional distribution of the second component with respect to the first component. We prove the strong existence and uniqueness of the solution, establishing the well-posedness of a two-factor local stochastic volatility (LSV) model calibrated to the market prices of European call options. In the spirit of \cite{jourdain2020existence}, we assume that the factor driving the volatility of the log-price takes finitely many values. Additionally, the propagation of chaos of the particle system is established, giving theoretical justification for the algorithm \cite{guyon2012being}.
\end{abstract}

\newtheorem{condition}[thm]{Condition}
\newtheorem{lemma}[thm]{Lemma}
\newcommand{\Var}[1]{\textrm{Var}\left(#1\right)}

\newcommand{\A}{\mathcal{A}}
\newcommand{\NN}{\mathbb{N}}
\newcommand{\sG}{\sqrt{\Gamma}}
\newcommand{\Sg}{\sqrt{\Gamma}^{-1}}
\newcommand{\Rns}{(\R^N_+)^*}

\newcommand{\Xb}{\overline{X}}
\newcommand{\sigmab}{\overline{\sigma}}
\newcommand{\Sigmab}{\overline{\Sigma}}

\newcommand{\Xt}{\widetilde{X}}
\newcommand{\Yt}{\widetilde{Y}}
\newcommand{\pt}{\widetilde{p}}
\newcommand{\sigmat}{\widetilde{\sigma}}
\newcommand{\Sigmat}{\widetilde{\Sigma}}
\newcommand{\ut}{\widetilde{u}}
\newcommand{\At}{\widetilde{A}}

\newcommand{\Ha}{H^{1+\alpha/2,2+\alpha}}

\newcommand{\inv}[1]{\frac{1}{#1}}
\newcommand{\Prob}[1]{\mathbb{P}\left(#1\right)}
\newcommand{\E}[1]{\mathbb{E}\left[#1\right]}
\newcommand{\R}{\mathbb{R}}
\newcommand{\Rd}{\mathbb{R}^d}
\newcommand{\f}[2]{\frac{#1}{#2}}
\renewcommand{\epsilon}{\varepsilon}
\newcommand{\VIX}{\textrm{VIX}}
\newcommand{\rI}{\rinterval}
\newcommand{\lI}{\linterval}
\intervalconfig{soft open fences}

\newcommand{\loc}{\textrm{loc}}
\newcommand{\lsv}{\textrm{lsv}}

\newcommand{\Xh}{\hat{X}}
\newcommand{\Yh}{\hat{Y}}
\newcommand{\ph}{\hat{p}}
\newcommand{\sigmah}{\hat{\sigma}}
\newcommand{\Sigmah}{\hat{\Sigma}}

\newcommand{\Io}[2]{\interval[open left]{#1}{#2}}
\newcommand{\IO}[2]{\interval[open right]{#1}{#2}}

\maketitle

\section{Introduction}

For the family of local stochastic volatility (LSV) models
\begin{equation}\label{LSV}
	\begin{cases}
		dS_t = \sigma_{\lsv}(t, S_t)a_tS_tdB_t,\ t\ge 0,      \\
		S_{t=0} = S_0,                                        \\
	\end{cases}
\end{equation}
to be calibrated to the market prices of European call options $(C(t,K))_{t> 0,K > 0}$ it is enough to take the leverage function $\sigma_{\lsv}$ as
\begin{equation}\label{du-pire}
	\sigma_{\lsv}(t, K) = \frac{\sigma_{\loc}(t, K)}{\sqrt{\E{a_t^2|S_t=K}}},\ t>0,K > 0,
\end{equation}
where $(S_t)_{t\ge 0}$ is the price process, $(B_t)_{t\ge 0}$ is a real standard Brownian motion, $S_0$ is a positive random variable independent of $(B_t)_{t\ge 0}$, $\sigma_{\loc}(t, K):=\sqrt{\frac{2\partial_tC(t,K)}{\partial_K^2C(t,K)}}, t> 0,K>0$ is the so-called Dupire volatility \cite{dupire1994pricing}  and $(a_t)_{t\ge 0}$ is any stochastic process. This is justified by Gy{\"o}ngy's theorem \cite{gyongy1986mimicking} that asserts that a stochastic process $(S_t)_{t\ge 0}$ solving \eqref{LSV} and \eqref{du-pire} has fixed-time marginal distributions given by the marginal distributions of the local volatility model $dS^{\loc}_t = \sigma_{\loc}(t, S^{\loc}_t)S^{\loc}_tdB_t$ and starting at $S_0$. In order to make the model tractable, one commonly assumes that $(a_t)_{t\ge 0}$ is given by $(\sqrt{f(Y_t)})_{t\ge 0}$ where $f:\R\to \R_+^*$ is a bounded smooth function and $(Y_t)_{t\ge 0}$ is another real It\^o-L\'evy process eventually correlated to $(B_t)_{t\ge 0}$.

Formally, the joint density $p(t, s, y)$ of $(S_t, Y_t),(t,s,y)\in \R_+\times\R_+^*\times\R$, solves a quasi-linear and non-local Fokker-Planck partial differential equation (PDE), whose coefficients depend upon the non-linear term
\begin{equation}
	\frac{\int p(\cdot, \cdot, z)dz}{\int f(z)p(\cdot, \cdot, z)dz}.
\end{equation}

While the applications are important in calibration to market implied volatility surfaces (\cite{henry2009calibration}, \cite[Chapter 11]{guyon2013nonlinear}, \cite{abergel2017nonparametric}, \cite{saporito2019calibration}) the existence and uniqueness of solutions to the stochastic differential equation (SDE) and the partial differential equation (PDE) problems is still an open problem. Only partial results have been obtained in particular cases.

Abergel and Tachet showed in \cite[Theorem 3.1]{abergel2010nonlinear}, the existence of a classical solution to the PDE problem in a bounded domain of $\R^2$ and with additional Dirichlet boundary condition - for short time, and for $\sup |f''|$ small enough. In \cite[Theorem 1.4]{lacker2020inverting}, the authors established the existence and uniqueness of a stationary solution to a similar SDE (a drift needs to be added to the dynamic of $(S_t)_{t\ge 0}$ to allow the possibility of a stationary measure). Jourdain and Zhou proved in \cite[Theorems 2.4, 2.5]{jourdain2020existence}, the existence of a weak solution to the SDE.  Under the assumption that $(Y_t)_{t\ge 0}$ is a jump process taking finitely many values and after writing the PDE problem as a system of parabolic equations in non-divergence form, the authors are able to write a variational formulation of the problem, for which existence of a solution can be proved - by the Galerkin's method -  provided that the range of $f$ is small enough. They are then able to prove the existence of a weak solution to the SDE. Uniqueness and propagation of chaos are out of reach in the approach of \cite{jourdain2020existence} because higher regularity of the weak solution is needed. Global regularity of weak solutions to parabolic systems is still an open problem.

The difficulty of the analysis of the McKean-Vlasov SDE and the related non-linear PDE stems from the singularity of the denominator $\int f(z)p(\cdot, \cdot, z)dz$. However, the well-posedness of McKean-Vlasov problems with coefficients depending smoothly on the density $p$ of the unknown process has been studied in various papers \cite{jourdain1998propagation}, \cite{oelschlager1985law}, \cite{kohatsu1997weak}, \cite{bossy2019wellposedness}.

The main contribution of our paper is the proof of well-posedness of a regularized version of SDE \eqref{LSV} - \eqref{du-pire} when $(a_t)_{t\ge 0}$ is time independent and given by $f(Y)$, where $Y$ is a fixed random variable independent of $(B_t)_{t\ge 0}$ and takes finitely many values in $\{1,\dots,N\}$, $N\ge 2$ and $f:\{1,\dots,N\}\to \R_+^*$. The regularization is chosen so that the calibration property is conserved. In other words, the fixed-time marginals of a solution are still given by the marginals of $S^{\loc}_t = S_0 + \int_0^t\sigma_{\loc}(s, S_s^{\loc})S_s^{\loc}dB_s$, $t\ge 0$.

Let $(X_t)_{t\ge 0} = (\log{S_t})_{t\ge 0}$ be the log price and set $\sigma(t, x) := \sigma_{\loc}(t, e^x), (t, x)\in \R_+\times\R$.
Herein, we assume that $Y$ is a random variable taking finitely many values $\{1,\cdots,N\}$, $N\ge 2$.
Let $\epsilon>0$ and consider the McKean-Vlasov SDE
\begin{equation}\label{lsv:SDE}
	\begin{cases}
		dX_t = -\inv{2}\frac{\epsilon + f(Y)p(t, X_t)}{\epsilon + \E{f(Y)|X_t}p(t, X_t)}\sigma(t, X_t)^2dt + \sqrt{\frac{\epsilon + f(Y)p(t, X_t)}{\epsilon + \E{f(Y)|X_t}p(t, X_t)}}\sigma(t, X_t)dB_t, \\
		\Prob{X_t\in dx} = p(t, x)dx,                                                                                                                                                                    \\
		X_{t=0} = X_0,
	\end{cases}
\end{equation}
where $X_0$ is a real random variable independent of $(B_t)_{t\ge 0}$.

We notice immediately
\begin{equation}
	\E{\left(\sqrt{\frac{\epsilon + f(Y)p(t, X_t)}{\epsilon + \E{f(Y)|X_t}p(t, X_t)}}\sigma(t, X_t)\right)^2\Big|X_t} = \sigma(t, X_t)^2,
\end{equation}
and consequently the model is calibrated, meaning $\exp(X_t)$ and $S_t^{\loc}$ have the same law for all $t\ge 0$.

In the general case where $(a_t)_{t\ge 0} = (f(Y_t))_{t\ge 0}$ and $Y$ is an It\^o process, the uniform ellipticity of the non-linear PDE - written in divergence form - solved by the joint density $p(t, x, y)$ of $(X_t, Y_t)$, $t\ge 0$, does not hold a priori. This property is a key element in establishing uniqueness in \cite[Equation 1.8]{jourdain1998propagation}. Therefore, we made the restrictive assumption that $Y$ is a time-independent discrete random variable and takes finitely many values. Assuming that the range of $f$ is small enough, we are able to derive a uniform ellipticity property for the PDE problem, in the spirit of what is done in \cite{jourdain2020existence}.

We assume in the whole paper that $\sigma$ is smooth, has bounded derivatives and there exist $0<\sigma_0 < \sigma_1$ such that $\sigma_0\le \sigma(t, x)\le\sigma_1$ for all $(t, x)\in\R_+\times\R$. Moreover, we assume that the measure $\Prob{X_0\in dx\cap Y=n}$ admits a density $P_n:\R\to\R_+$ - of total mass $\Prob{Y=n}$ - with bounded first and second derivatives and such that for some $\alpha\in (0, 1)$, the second derivative $P^{(2)}_n$  is $\alpha$-H\"older and
\begin{equation}
	\| P_n \|_{C^{2+\alpha}} := \sum_{0\le k\le 2}\|P^{(k)}_n\|_\infty+ \sup_{x, y\in \R,x\neq y}\frac{|P_n^{(k)}(x) - P_n^{(k)}(y)|}{|x-y|^\alpha} < \infty.
\end{equation}

Denote $f_{\max}=\max_{1\le n\le N} f(n)$, $f_{\min} = \min_{1\le n\le N}f(n)$ and
\begin{equation}
	\bar{f} = \inv{N}\sum_{n=1}^Nf(n).
\end{equation}

We introduce the following small range condition on $f$.
\begin{condition}\label{cond:f}
	\begin{equation}
		\begin{split}
			\inv{2}\left[N + 1 - \max_{1\le k\le N} \sqrt{\sum_{n=1,n\neq k}^Nf(n)\sum_{n=1,n\neq k}^N\inv{f(n)}}\right] \wedge 1 > &\inv{f_{\min}}\sqrt{\sum_{n=1}^N(f(n) - \bar{f})^2}\\
			&+ \frac{f_{\max} - f_{\min}}{f_{\min}}.\\
		\end{split}
	\end{equation}
\end{condition}

We are now ready to state our main result.
\begin{thm}\label{thm:existence-uniqueness}
	Let $T>0$. If Condition \ref{cond:f} is satisfied, $P$ belongs to $(C^{2+\alpha})^N,\alpha\in (0, 1)$ and the norm $\sum_{n=1}^N\|P_n\|_{C^{2+\alpha}}$ is small enough, then there exists a unique strong solution to SDE \eqref{lsv:SDE} on $[0,T]$. Moreover, $X$ admits a smooth density $p\in \left(C^{1+\alpha/4,2+\alpha/2}\right)^N$.
\end{thm}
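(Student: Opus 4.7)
Because $Y$ takes values in $\{1,\ldots,N\}$, the joint law of $(X_t,Y)$ is captured by the $N$-tuple of sub-densities $p_n(t,\cdot)$ defined by $p_n(t,x)dx = \Prob{X_t\in dx,\,Y=n}$, and the nonlocal quantities in \eqref{lsv:SDE} become $p(t,x) = \sum_{m=1}^N p_m(t,x)$ and $\E{f(Y)|X_t=x}\,p(t,x) = \sum_{m=1}^N f(m)p_m(t,x)$. I would first build $(p_n)$ as a classical solution of the associated Fokker--Planck system and only then recover the strong solution of the SDE from it. Setting $g_n(p)(t,x) := \sigma(t,x)^2\,\dfrac{\epsilon + f(n)\sum_m p_m(t,x)}{\epsilon + \sum_m f(m)p_m(t,x)}$, the system reads
\begin{equation*}
\partial_t p_n \;=\; \tfrac12\partial_{xx}\!\bigl(g_n(p)\,p_n\bigr) + \tfrac12\partial_x\!\bigl(g_n(p)\,p_n\bigr),\qquad p_n(0,\cdot)=P_n,\quad n=1,\ldots,N.
\end{equation*}
The $\epsilon$-regularization together with the nonnegativity of $p_m$ gives $\sigma_0^2\,f_{\min}/f_{\max}\le g_n(p)\le \sigma_1^2\,f_{\max}/f_{\min}$, so the linearization at any nonnegative density is uniformly parabolic.

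For existence I would run a Schauder fixed-point argument in $\mathcal{X}:=\bigl(C^{1+\alpha/4,2+\alpha/2}([0,T]\times\R)\bigr)^N$. Define $\Phi:q\mapsto p$ where each component $p_n$ solves the \emph{linear} parabolic equation obtained by freezing the nonlinearity at $g_n(q)$. Classical Schauder estimates for scalar linear parabolic equations, together with the uniform ellipticity above, yield a bound of the form $\|p_n\|_{C^{1+\alpha/4,2+\alpha/2}} \le C\bigl(\|P_n\|_{C^{2+\alpha}}+\|g_n(q)\|_{C^{\alpha/4,\alpha/2}}\bigr)$, with $C$ depending only on the H\"older norms of $\sigma$ and on the ellipticity constants. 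Provided $\sum_n\|P_n\|_{C^{2+\alpha}}$ is small, a quantitative version of this estimate shows that $\Phi$ stabilizes a small closed convex ball of $\mathcal{X}$ (with $p_n\ge 0$ preserved by the parabolic maximum principle); since the embedding $\mathcal{X}\hookrightarrow(C^{\alpha/4,\alpha/2})^N$ is compact and $\Phi$ is continuous, Schauder's theorem produces a fixed point $p\in\mathcal{X}$, the desired classical solution of the Fokker--Planck system.

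Uniqueness at the PDE level would be obtained by subtracting the equations satisfied by two candidate solutions and running a weighted $L^2$ energy estimate on the differences $p_n-\widetilde p_n$. The coupling between components enters only through the Lipschitz dependence of the map $p\mapsto (g_n(p))_n$; a direct algebraic computation of the Jacobian $\partial_{p_k}g_n$ shows that its pointwise modulus, measured in the relevant matrix norm, is controlled by the right-hand side of Condition~\ref{cond:f}, while the effective coercivity constant generated by the parabolic forms $\tfrac12\partial_{xx}(g_n(p)\,\cdot)+\tfrac12\partial_x(g_n(p)\,\cdot)$, after integration by parts on the diagonal and Cauchy--Schwarz on the off-diagonal coupling, is bounded below by the left-hand side. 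Condition~\ref{cond:f} is then precisely what renders the resulting Gronwall inequality strictly contractive and forces $p\equiv\widetilde p$. I expect this ellipticity-versus-Lipschitz balance to be the main technical obstacle, as the two sides must be tracked with sharp constants for the argument to close.

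Finally, strong existence and pathwise uniqueness for the SDE follow by freezing the density: with $p\in\mathcal{X}$ now known, the drift and diffusion in \eqref{lsv:SDE}, conditioned on $\{Y=n\}$, are bounded and H\"older in $(t,x)$ with diffusion uniformly bounded below, so on each fiber the equation reduces to a classical It\^o SDE with Lipschitz coefficients for which a strong solution on $[0,T]$ exists and is pathwise unique. That the marginals of this solution coincide with the prescribed $p_n$ is built into the fixed-point identity satisfied by $p$, which closes the argument and simultaneously delivers the claimed regularity $p\in(C^{1+\alpha/4,2+\alpha/2})^N$.
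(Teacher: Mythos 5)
Your existence argument is sound and essentially parallel to the paper's: where you invoke Schauder's fixed-point theorem on the solution map $\Phi$ in a small ball of $(C^{1+\alpha/4,2+\alpha/2})^N$, the paper runs a Picard iteration $p^{m-1}\mapsto p^m$, proves the iterates stay bounded in $(C^{1+\alpha/2,2+\alpha})^N$ via the same linear Schauder estimates plus the smallness of $\sum_n\|P_n\|_{C^{2+\alpha}}$, and extracts a limit point by compactness. These are two packagings of the same compactness argument, and your version is clean. The final step of recovering the strong SDE solution by freezing the density is also exactly the paper's.

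The uniqueness part, however, has a genuine gap. You correctly see that the argument must be a weighted $L^2$ energy estimate closed by Gronwall, and you correctly identify that Condition~\ref{cond:f} encodes a balance that must be tracked sharply. But the way you describe obtaining coercivity --- ``integration by parts on the diagonal and Cauchy--Schwarz on the off-diagonal coupling,'' with Condition~\ref{cond:f} read as ``Lipschitz constant of the Jacobian $<$ diagonal coercivity constant'' --- is a diagonal-dominance heuristic, and it does not produce the correct condition. The paper's first crucial step is algebraic, not analytic: it recognizes the divergence structure $\partial_x[B^\epsilon(u)u]=A^\epsilon(u)\partial_x u$ where $A^\epsilon$ is the Jacobian of $u\mapsto B^\epsilon(u)u$, so the principal part is $\tfrac12\partial_x[\sigma^2 A^\epsilon(p)\partial_x p]$ and the whole question is the (positive but non-symmetric, non-diagonally-dominant) matrix $A^\epsilon(u)$. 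The matrix $A^\epsilon(u)$ is \emph{not} coercive in the standard inner product; the paper (Proposition~\ref{prop:Gamma}, built on Proposition~\ref{prop:JZ} from Jourdain--Zhou) constructs the specific weight $\Gamma=J+\delta I$ and decomposes $X=U+V$ with $U\in\mathbf{1}\R$, $V\in\mathbf{1}^\perp$. The identities $J(A^0-I)=0$ and $JD=0$ are what make $\langle X,JA^\epsilon X\rangle$ reduce to $N\|U\|_2^2$ up to controlled error, while the $\delta I$ part handles $V$; Condition~\ref{cond:f} then emerges as the requirement $\kappa_0>\beta(f)$ and $1>\beta(f)$. Your proposed argument does not use $J$, does not split along $\mathbf{1}$ and $\mathbf{1}^\perp$, and would at best yield a crude (and likely unachievable) diagonal-dominance condition rather than Condition~\ref{cond:f}. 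Without Proposition~\ref{prop:Gamma} or a substitute for it, the energy estimate does not close.

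A secondary point: once uniqueness of the PDE solution is known, the paper can afford to establish PDE existence in the slightly lossy space $(C^{1+\alpha/4,2+\alpha/2})^N$ (as you also do), because uniqueness is proved at exactly that regularity. Keep in mind, if you pursue your Schauder route, that the fixed point you obtain must land in a class to which your uniqueness theorem applies.
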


We study as well the question of propagation of chaos for SDE \eqref{lsv:SDE}. Let $M\ge 1$, $(\delta_M)_{M\ge 1}\in (0,1)^{\NN^*}$ be a sequence converging to $0$ and $W_1 : \R\to\R_+^*$ be a bounded kernel function with a bounded derivative and satisfying $\int W_1(x)dx = 1$ and $\int xW_1(x)dx = 0$. Denote
\begin{equation}
	W_{\delta_M} := \inv{\delta_M}W_1\left(\frac{\cdot}{\delta_M}\right).
\end{equation}

Let $(B_t^i)_{t\ge 0,i\ge 1}$ be a collection of i.i.d standard Brownian motions and $(X_0^i, Y^i)_{i\ge 1}$ be a collection of i.i.d random variables of law $\Prob{Y^i=n\cap X_0^i\in dx}=P_n(x) dx,\ 1\le n\le N$.

For $M\ge 1$, we introduce the approximating system of $M$ particles, already considered in \cite{guyon2012being} (for $\epsilon = 0$)
\begin{equation}\label{SDE-particles}
	\begin{split}
		dX^{i,M}_t = &- \inv{2}\frac{\epsilon + f(Y^{i})\inv{M}\sum_{j=1}^M W_{\delta_M}(X^{i,M}_t-X_t^{j,M})}{\epsilon+ \inv{M}\sum_{j=1}^Mf(Y^j)W_{\delta_M}(X^{i,M}_t-X^{j,M})}\sigma(t, X^{i,M}_t)^2dt\\
		&+ \sqrt{\frac{\epsilon + f(Y^{i})\inv{M}\sum_{j=1}^M W_{\delta_M}(X^{i,M}_t-X_t^{j,M})}{\epsilon+ \inv{M}\sum_{j=1}^Mf(Y^j)W_{\delta_M}(X^{i,M}_t-X^{j,M})}}\sigma(t, X^{i,M}_t)dB^{i}_t,\\
	\end{split}
\end{equation}
initialized at $X^{i,M}_{t=0}= X_0^i$, $1\le i\le N$.

Assume that the conditions of Theorem \ref{thm:chaos} are satisfied. For each $1\le i\le M$, let $(\Xh^i)_{t\ge 0}$ be the particle starting at $X^{i,M}_0$ solving \eqref{lsv:SDE} and driven by the Brownian motion $(B^i_t)_{t\ge 0}$ and $Y^i$
\begin{equation}\label{SDEh}
	\begin{cases}
		\begin{aligned}
			d\Xh^{i}_t = & -\inv{2}\frac{\epsilon + f(Y^i)\sum_{n=1}^N\ph_n(t, \Xh^{i}_t)}{\epsilon + \sum_{n=1}^Nf(n)\ph_n(t, \Xh^{i}_t)}\sigma(t,\Xh_t^i)^2dt     \\
			             & + \sqrt{\frac{\epsilon + f(Y^i)\sum_{n=1}^N\ph_n(t, \Xh^{i}_t)}{\epsilon + \sum_{n=1}^Nf(n)\ph_n(t, \Xh^{i}_t)}}\sigma(t,\Xh_t^i)dB^i_t, \\
		\end{aligned} \\
		\Prob{\Xh^{i}_t\in dx\cap Y^i = n} = \ph_n(t, x)dx,                                                                                                                                           \\
		0\le t\le T.
	\end{cases}.
\end{equation}
The existence and uniqueness of the process $(X^{i,M}_t)_{0\le t\le T, 1\le i\le N}$ is ensured by the regularity of the drift and diffusion coefficients, given by Theorem \ref{thm:existence-uniqueness}.

Under Condition \ref{cond:f}, propagation of chaos holds.
\begin{thm}\label{thm:chaos}
	Let $T>0$, $(X^{i,M}_t)_{t\ge 0}$ and $(\Xh^i_t)_{t\ge 0}$ be given by \eqref{SDE-particles} and \eqref{SDEh} respectively. Assume that $(\delta_M)_{M\ge 1}$ converges slowly enough towards zero such that 
	\begin{equation}
		\lim_{M\to \infty} \inv{M}\frac{\exp{(CT\delta_M^{-4})}}{\delta_M^2} = 0.
	\end{equation}
	for all constants $C>0$.

	Then for all $1\le i\le N$
	\begin{equation}
		\lim_{M\to \infty}\E{\sup_{0\le t\le T}|X^i_t -\Xh^i_t|^2} = 0.
	\end{equation}
	Consequently, for all $k\ge 1$ the law of the process $(X^{1,M},X^{2,M},\dots, X^{k, M})$ converges weakly towards $\mu \otimes \mu \otimes \dots \otimes \mu$, where $\mu$ is the law of $\Xh^1$.
\end{thm}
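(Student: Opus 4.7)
My plan is to use a synchronous coupling: for each $1\le i\le M$, pair the particle $X^{i,M}$ with the McKean--Vlasov copy $\Xh^i$ from \eqref{SDEh} driven by the same Brownian motion $B^i$ and sharing the same label $Y^i$ and initial value $X_0^i$. Exchangeability makes $u_M(t):=\E{\sup_{s\le t}|X^{i,M}_s-\Xh^i_s|^2}$ independent of $i$. Applying It\^o's formula to $|X^{i,M}_s-\Xh^i_s|^2$ and the Burkholder--Davis--Gundy inequality for the martingale part, the problem reduces to controlling the time integral of the expected squared differences of the drift and diffusion coefficients of \eqref{SDE-particles} and \eqref{SDEh} along the coupled paths.

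The central step is a three-piece decomposition of the drift difference (and of the analogous one for the diffusion)
\begin{multline*}
b^M_i(X^{i,M}_t, X^{\cdot,M}_t) - b(\Xh^i_t)
= \bigl[b^M_i(X^{i,M}_t, X^{\cdot,M}_t) - b^M_i(X^{i,M}_t, \Xh^{\cdot}_t)\bigr] \\
+ \bigl[b^M_i(X^{i,M}_t, \Xh^{\cdot}_t) - b(X^{i,M}_t)\bigr]
+ \bigl[b(X^{i,M}_t) - b(\Xh^i_t)\bigr],
\end{multline*}
where $b^M_i(\cdot,\mu)$ denotes the particle drift with configuration $\mu$. The three brackets are controlled as follows. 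The ratio $(\epsilon+f(Y^i)K)/(\epsilon+\widetilde K)$ of kernel averages appearing in $b^M_i$ is uniformly bounded by $f_{\max}/f_{\min}$ and has gradient of order $\|W'_{\delta_M}\|_\infty/\epsilon = O(\delta_M^{-2})$ both in $x$ and in each particle argument, thanks to the $\epsilon$-lower bound on the denominator, so the first bracket is bounded by $C\delta_M^{-2}\,\tfrac{1}{M}\sum_j|X^{j,M}_t-\Xh^j_t|$. The third bracket uses the spatial Lipschitzness of the limit drift $b$ built from $\ph_n\in C^{1+\alpha/4,\,2+\alpha/2}$ (Theorem~\ref{thm:existence-uniqueness}), which is a constant \emph{independent} of $\delta_M$. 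For the middle bracket I would condition on $(B^i,Y^i,X_0^i)$, which leaves the copies $\Xh^j$ ($j\neq i$) i.i.d.\ with common marginal density $\sum_n\ph_n(t,\cdot)$, and split the bracket into a mean-zero kernel density fluctuation of variance $O((M\delta_M^2)^{-1})$ and a deterministic convolution bias of size $O(\delta_M^2)$ coming from $\int yW_1(y)dy=0$ and the bounded second $x$-derivative of $\ph_n$.

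Taking expectation of squares and using exchangeability ($\E{\tfrac{1}{M}\sum_j|X^{j,M}_t-\Xh^j_t|^2} = u_M(t)$) yields a Gronwall-type inequality
\begin{equation*}
u_M(t)\le C\int_0^t\Bigl(\delta_M^{-4}\,u_M(s) + \delta_M^4 + \tfrac{1}{M\delta_M^2}\Bigr)\,ds,
\end{equation*}
which, by the classical integral Gronwall lemma, produces an upper bound of order $(\delta_M^4+(M\delta_M^2)^{-1})\exp(CT\delta_M^{-4})$. The main technical obstacle is the competition between the exponential Gronwall amplification forced by the diverging squared kernel Lipschitz constant and the decay of the statistical and regularization errors; the scaling hypothesis of the theorem, required for every constant $C>0$, is precisely the balance needed to kill the amplified statistical term $\exp(CT\delta_M^{-4})/(M\delta_M^2)$, while the bias contribution is treated by exploiting the smoothness of $\ph_n$ from Theorem~\ref{thm:existence-uniqueness} in the same balance.

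Finally, the weak convergence of the $k$-marginals of $(X^{1,M},\dots,X^{k,M})$ to $\mu^{\otimes k}$ for any fixed $k\ge 1$ follows from the pairwise $L^2$-coupling together with the independence of the limiting family $(\Xh^i)_{i\ge 1}$: joint tightness gives subsequential limits, each coordinate marginal is $\mu$ by the coupling, and the product structure is forced by the independence of the $\Xh^i$ by a standard tightness-and-uniqueness argument.
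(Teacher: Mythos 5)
Your three-piece decomposition is a natural idea, but it contains a genuine gap that makes the final Gronwall estimate vacuous. In your inequality
\begin{equation*}
u_M(t)\le C\int_0^t\Bigl(\delta_M^{-4}\,u_M(s) + \delta_M^4 + \tfrac{1}{M\delta_M^2}\Bigr)\,ds,
\end{equation*}
the mollification bias $\delta_M^4$ sits in the Gronwall \emph{source} alongside the statistical error, while the rate is $C\delta_M^{-4}$, forced by the $O(\delta_M^{-2})$ Lipschitz constant of $W_{\delta_M}$ coming from your first bracket. Gronwall then gives
$u_M(T)\le CT\bigl(\delta_M^4 + (M\delta_M^2)^{-1}\bigr)\exp(CT\delta_M^{-4})$, and $\delta_M^4\exp(CT\delta_M^{-4})\to\infty$ as $\delta_M\to 0$ no matter how slowly $\delta_M$ decays. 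The hypothesis of the theorem controls only $\exp(CT\delta_M^{-4})/(M\delta_M^2)$; it cannot (and no choice of $\delta_M\to 0$ can) kill a term of the form $\delta_M^p\exp(CT\delta_M^{-4})$ for any $p>0$. The problem is structural in a one-shot synchronous coupling between $X^{i,M}$ and $\Xh^i$: your middle bracket necessarily compares the empirical kernel average over the i.i.d.\ copies $\Xh^j$ with the \emph{sharp} density $\ph$, so it carries both the statistical fluctuation \emph{and} the kernel-smoothing bias, and both are multiplied by the same exponential amplifier.

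The paper avoids this by inserting an intermediate mollified McKean--Vlasov system $\Xt^{i,M}$ (SDE \eqref{SDEt}) whose coefficients use the convolved true density $W_{\delta_M}*\pt$, and splitting $|X^{i,M}-\Xh^i|\le |X^{i,M}-\Xt^{i,M}|+|\Xt^{i,M}-\Xh^i|$. The first comparison (Proposition \ref{prop:X-Xt}) matches an empirical kernel average $\tfrac1M\sum_j W_{\delta_M}(\cdot-X^{j,M}_t)$ against its population mean $W_{\delta_M}*\pt$, so the Gronwall source is \emph{purely} the Monte Carlo error $O((M\delta_M^2)^{-1})$ with no bias; the $\delta_M^{-4}$ amplifier only hits that statistical term, giving exactly $\tfrac{C}{M}\tfrac{\exp(CT\delta_M^{-4})}{\delta_M^2}$. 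The second comparison (Proposition \ref{prop:Xt-Xh}) carries the bias, but there the two diffusions both have coefficients built from bounded smooth densities ($\pt$ and $\ph$, with $\delta_M$-uniform $C^{1+\alpha/4,2+\alpha/2}$ bounds via Proposition \ref{prop:estimate-pt-ph}), so the Gronwall rate is $\delta_M$-independent and the bias is not exponentially amplified, yielding $C\delta_M$. To repair your proof you would need to reproduce this separation: your bracket that replaces $X^{j,M}$ by the i.i.d.\ reference points inside the kernel must use $\Xt^j$ (not $\Xh^j$), so that the residual in that comparison has no bias, and the bias is then pushed into a second Gronwall whose rate does not involve $\|W'_{\delta_M}\|_\infty$. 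That is precisely the role of the auxiliary system $\Xt^{i,M}$, together with the PDE energy estimate $\sup_t\int\|\pt-\ph\|_2^2\le C\delta_M$, which has no analogue in your outline.

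Your final paragraph on passing from the $L^2$-coupling to weak convergence of the $k$-marginals is fine, and your identification of the three error mechanisms (kernel Lipschitz amplification, Monte Carlo fluctuation, smoothing bias) is correct; the issue is only that a one-level Gronwall cannot keep the bias away from the exponential.
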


The main application of this result is the justification of the particular method for calibrating the LSV model described in \cite{guyon2012being}, \cite[Section 11.6.1]{guyon2013nonlinear}.

The rest of the article is structured as follows. In Section \ref{sec:wellposedness} we introduce notation and prove Theorem \ref{thm:existence-uniqueness}. Existence is proved in Subsection \ref{subsec:existence} and uniqueness in Subsection \ref{subsec:uniqueness}. Section \ref{sec:chaos} is devoted to establishing propagation of chaos.


\section{Existence and uniqueness}\label{sec:wellposedness}

Throughout the rest of the paper $T>0$ is fixed, and

\begin{itemize}
	\item For any $d\ge 1$ and $X\in \R^d$, we denote $\|X\|_2= \sqrt{\sum_{i=1}^dX_i^2}$ and $\|X\|_\infty= \max_{1\le i\le d}|X_i|$.
	\item The scalar product is denoted $\langle\cdot,\cdot\rangle$
	      \begin{equation}
		      \langle X, Y\rangle = \sum_{i=1}^dX_iY_i,\ X\in\R^d,Y\in\R^d.
	      \end{equation}
	\item We denote by $L^p,p\ge 1$ the space of measurable real-valued functions $\phi$ for which $\|\phi\|_{L^p} = \left(\int |\phi(x)|^pdx\right)^{1/p}$ is finite.
	\item We denote by $C(0, T, L^2)$ the space of measurable functions $\phi : (t, x)\in [0,T]\times\R\to \R$ such that the mapping $t\in [0, T]\to \phi(t, \cdot)$ takes values in $(L^2,\|\|_{L^2})$ and is continuous.
	\item For $\alpha\in (0, 1)$ we denote by $C^{2+\alpha}$ the space of real functions $\phi$ on $\R$ for which $\|\phi\|_{C^{2+\alpha}} <\infty$.
	      We denote by $C^{(k+\alpha)/2,k+\alpha}$ the space of real functions $\phi$ on $[0,T]\times\R$ which are continuous together with their derivatives $\partial_t^r\partial_x^l\phi$, $2r+l\le k$ and admit a finite norm
	      \begin{equation}
		      \begin{split}
			      \|\phi\|_{C^{(k+\alpha)/2,k+\alpha}} &= \sum_{2r+l\le k}\sup_{(t, x)\in [0,T]\times\R}|\partial_t^r\partial_x^l\phi(t, x)|\\
			      & + \sum_{k-1\le 2r+l\le k} \sup_{x\in\R,t,s\in [0 ,T]}\frac{|\partial_t^r\partial_x^l\phi(t, x)-\partial^r_t\partial^l_x\phi(s, x)|}{|t-s|^{(k-2r-l+\alpha)/2}}\\
			      & + \sum_{2r+l= k} \sup_{t\in[0,T],x,y\in\R}\frac{|\partial_t^r\partial_x^l\phi(t, x)-\partial^r_t\partial^l_x\phi(t, y)|}{|x-y|^\alpha},
		      \end{split}
	      \end{equation}
	      where  $\partial_t$ and $\partial_x$ are the partial derivatives with respect to $t$ and $x$.
	\item $S^{++}$ is the set of symmetric and positive $N\times N$ real matrices.
\end{itemize}

For each $1\le n\le N$ and $t\ge 0$, denote by $p_n(t, x)$ the conditional density of $X_t$ given $Y=n$, multiplied by $\Prob{Y=n}$. In other words, $p_n(t, x)dx = \Prob{X_t\in dx\cap Y= n}, x\in\R, 1\le n\le N$. Set $P_n(x)dx = \Prob{X_0\in dx\cap Y=n}$ for $1\le n\le N$.

The PDE problem solved by $(p_n)_{1\le n\le N}$ can be formulated as a system of $N$ parabolic PDEs
\begin{equation}\label{SPDE}
	\begin{cases}
		\partial_t p = \inv{2}\partial_{xx}[\sigma^2B^\epsilon(p)p] + \inv{2}\partial_x\left[\sigma^2B^\epsilon(p)p\right],\ (t, x)\in [0, T]\times\R, \\
		p_n / \Prob{Y = n}\textrm{is a probability density,}                                                                                        \\
		p_n(0, x) = P_n(x),\ x\in \R, 1\le n\le N,                                                                                                     \\
	\end{cases}
\end{equation}
where $B^\epsilon(u), u\in (\R_+)^N$ is a diagonal matrix whose diagonal elements are
\begin{equation}
	B^\epsilon_{nn}(u) = B^\epsilon_n(u) := \frac{\epsilon + f(n)\sum_{k=1}^N u_k}{\epsilon + \sum_{k=1}^N f(k)u_k},\quad u\in \R_+^N, 1\le n\le N.
\end{equation}

The proof of Theorem \ref{thm:existence-uniqueness} is a direct application of Propositions \ref{prop:existence} and \ref{prop:uniqueness} below.

\subsection{Existence}\label{subsec:existence}
Under the condition that the norm $\sum_{n=1}^N\|P_n\|_{C^{2+\alpha}}$ is small enough, we prove the existence, in the classical sense, of a solution to Problem \eqref{SPDE} and the existence of a stochastic process $(X_t)_{t\ge 0}$ solving SDE \eqref{lsv:SDE}. The proof follows the approach of \cite[Proof of Proposition 2.2]{jourdain1998propagation} adapted to our specific McKean-Vlasov equation, whose coefficients depend on the marginal $\sum_{n=1}^Np_n$ and the quantity $\sum_{n=1}^Nf(n)p_n$.

\begin{prop}\label{prop:existence}
	There exists a constant $C$, depending on $T$, $f_{\max}$, $f_{\min}$ and $\epsilon$, such that if $\sum_{n=1}^N\| P_n\|_{C^{2+\alpha}} \le C$, then there exist a solution $(p_n)_{1\le n\le N}\in (C^{1+\alpha/2, 2+\alpha})^N$ to Problem \eqref{SPDE}, and a solution $(X_t)_{t\in [0, T]}$ to SDE \eqref{lsv:SDE}.
\end{prop}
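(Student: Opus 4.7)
The plan is a fixed-point argument on a small ball of a parabolic H\"older space, following the strategy of Jourdain (1998) adapted to the coefficients $B^\epsilon_n$. Given a candidate $\pt = (\pt_1,\dots,\pt_N)$, I freeze the nonlinearity by setting $a_n(t,x) := \sigma(t,x)^2 B^\epsilon_n(\pt(t,x))$ and define $\Phi(\pt) = p$ where each $p_n$ solves the decoupled linear PDE
\begin{equation*}
\partial_t p_n = \inv{2}\partial_{xx}(a_n p_n) + \inv{2}\partial_x(a_n p_n), \qquad p_n(0,\cdot) = P_n.
\end{equation*}
A fixed point of $\Phi$ is a classical solution of \eqref{SPDE}.

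The algebraic heart of the argument is that $B^\epsilon_n$ is a smooth rational function on $\R_+^N$, uniformly bounded above and away from $0$ thanks to the $\epsilon$-regularization in both numerator and denominator, and globally Lipschitz with constant depending only on $\epsilon, f_{\min}, f_{\max}$. Consequently, for $\pt$ in the ball $\mathcal{K}_R := \{\sum_n\|\pt_n\|_{C^{1+\alpha/2,2+\alpha}} \le R\}$, each $a_n$ lies in $C^{1+\alpha/2,2+\alpha}$ with norm bounded by some $K(R)$, and after expansion the equation for $p_n$ is uniformly parabolic in non-divergence form with $C^{\alpha/2,\alpha}$ coefficients. Classical parabolic Schauder theory then gives a unique solution $p_n \in C^{1+\alpha/2,2+\alpha}([0,T]\times\R)$ with
\begin{equation*}
\|p_n\|_{C^{1+\alpha/2,2+\alpha}} \le K'(R)\,\|P_n\|_{C^{2+\alpha}}.
\end{equation*}
Picking $R$ first and then requiring $\sum_n\|P_n\|_{C^{2+\alpha}}$ small enough that $K'(R)\sum_n\|P_n\|_{C^{2+\alpha}} \le R$ makes $\Phi$ invariant on $\mathcal{K}_R$. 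For the fixed point itself I would show $\Phi$ is a contraction in the weaker norm $(C^{\alpha/2,\alpha})^N$: writing the equation for $\Phi(\pt) - \Phi(\pt')$ as a linear parabolic system with source bounded by $\mathrm{Lip}(B^\epsilon)\,\|\pt - \pt'\|$, the Schauder constants multiplying this source are again tamed by the smallness of the initial data, delivering a contraction factor strictly below $1$.

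With $p$ produced, the SDE \eqref{lsv:SDE} is built by standard means: the smoothness of $p$ makes both the drift $-\inv{2}\sigma^2 B^\epsilon_n(p)$ and the diffusion $\sigma\sqrt{B^\epsilon_n(p)}$ bounded, uniformly elliptic and Lipschitz in $x$ for each fixed $n$, so conditionally on $Y = n$ a unique strong solution exists and is driven by $B$. The joint law $q_n(t,x)dx := \Prob{X_t\in dx\cap Y=n}$ satisfies the same linear Fokker--Planck system as $p_n$ with identical initial data, and uniqueness of classical solutions forces $q = p$, closing the McKean--Vlasov self-consistency. The main obstacle I anticipate is the balancing act at the heart of the fixed-point step: the Schauder constants $K(R)$ and $K'(R)$ deteriorate as $R$ grows and as $\epsilon$ shrinks, and the smallness threshold $C$ on the initial data in the statement must be extracted from this delicate tension between invariance of the ball and contraction on it.
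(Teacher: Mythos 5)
Your proposal matches the paper's strategy in its two load-bearing ingredients: freeze the nonlinearity to get decoupled linear parabolic problems, invoke the Schauder estimate of \cite{ladyzhenskaya1968linear} for the frozen problem (estimate \eqref{eq:est-ussr} in the paper), and use smallness of $\sum_n\|P_n\|_{C^{2+\alpha}}$ to keep the iterates in a ball. The difference is in how the fixed point is produced. The paper runs the Picard iteration $p^{m}_n = $ (solution of the linear PDE frozen at $p^{m-1}$), proves by induction that $\sum_n\|p^m_n\|_{C^{1+\alpha/2,2+\alpha}}\le 1$, and then extracts a subsequential limit in the compactly embedded space $(C^{1+\alpha/4,2+\alpha/2})^N$; this yields existence but not uniqueness of the fixed point, and uniqueness of the PDE solution is handled completely separately in Proposition \ref{prop:uniqueness} by an $L^2$ energy argument under Condition \ref{cond:f}. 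You instead invoke Banach's fixed point theorem, so the invariance-plus-contraction machinery would give you existence and local uniqueness simultaneously; that is a genuine simplification if the contraction can be made to close.

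The gap is in the contraction step as you have stated it. You claim a contraction in the \emph{weak} norm $(C^{\alpha/2,\alpha})^N$, asserting that the source for $\Phi(\pt)-\Phi(\pt')$ is ``bounded by $\mathrm{Lip}(B^\epsilon)\,\|\pt-\pt'\|$.'' But writing the equation for the difference, the source is
\begin{equation*}
g_n := \left(L_{n,\pt}-L_{n,\pt'}\right)\Phi(\pt')_n = \tfrac{1}{2}\partial_{xx}\!\left[\sigma^2\left(B^\epsilon_n(\pt)-B^\epsilon_n(\pt')\right)\Phi(\pt')_n\right] + \tfrac{1}{2}\partial_x\!\left[\cdots\right],
\end{equation*}
which involves two spatial derivatives of $B^\epsilon_n(\pt)-B^\epsilon_n(\pt')$, hence of $\pt-\pt'$. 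To apply the parabolic Schauder estimate (which maps a $C^{\alpha/2,\alpha}$ source with zero initial data to a $C^{1+\alpha/2,2+\alpha}$ solution), you need $g_n\in C^{\alpha/2,\alpha}$, i.e. the bracketed quantity in $C^{1+\alpha/2,2+\alpha}$, i.e. $\pt-\pt'$ controlled in $C^{1+\alpha/2,2+\alpha}$, \emph{not} in $C^{\alpha/2,\alpha}$. What Schauder actually gives you is
\begin{equation*}
\sum_n\|\Phi(\pt)_n-\Phi(\pt')_n\|_{C^{1+\alpha/2,2+\alpha}} \;\lesssim\; \sum_n\|\Phi(\pt')_n\|_{C^{1+\alpha/2,2+\alpha}} \cdot \sum_n\|\pt_n-\pt'_n\|_{C^{1+\alpha/2,2+\alpha}},
\end{equation*}
with implicit constant depending on $\|\pt\|_{C^{1+\alpha/2,2+\alpha}},\|\pt'\|_{C^{1+\alpha/2,2+\alpha}},\epsilon,f$; this \emph{is} a contraction in the \emph{strong} norm once the prefactor $\sum_n\|\Phi(\pt')_n\|\lesssim\sum_n\|P_n\|_{C^{2+\alpha}}$ is made small, which is exactly your smallness hypothesis. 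So the argument closes, but in $C^{1+\alpha/2,2+\alpha}$ rather than $C^{\alpha/2,\alpha}$. If you insist on contracting in a genuinely weaker norm you would need a Duhamel representation with parabolic-kernel smoothing to absorb the two derivatives in the source, not raw Schauder constants; the phrase as written does not support that.
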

\begin{proof}
	For $1\le n\le N$ and non-negative $u\in (C^{1+\alpha/2,2+\alpha})^N$, define the operator
	\begin{equation}
		\partial_tv - L_{n,u}v := \partial_tv - \inv{2}\partial_{xx}[\sigma^2B^\epsilon_n(u)v] - \inv{2}\partial_x[\sigma^2B^\epsilon_n(u)v].
	\end{equation}
	$L_{n,u}$ is a uniformly parabolic operator of second order with coefficients in $C^{\alpha/2,\alpha}$. According to \cite[Proposition 1.1]{jourdain1998propagation}(or \cite[Chapter IV, Theorem 5.1]{ladyzhenskaya1968linear}), there exists a unique $v_n\in C^{1+\alpha/2, 2+\alpha}$ such that $v_n / \Prob{Y_t=n}$ is a probability density and solves in $[0,T]\times\R$
	\begin{equation}\label{prop:pde-Ln}
		\begin{cases}
			\partial_tv_n = L_{n,u}v_n, \\
			v_n(0, x) = P_n(x),\ x\in\R.
		\end{cases}
	\end{equation}
	Moreover, there exists a constant $C'$ depending only on the regularity of the coefficients of $L_{n,u}$ (namely $f_{\max}, f_{\min}, \epsilon$ and  $\sum_{l=1}^N\|u_l\|_{1+\alpha/2,2+\alpha}$) such that
	\begin{equation}\label{eq:est-ussr}
		\|v_n\|_{C^{1+\alpha/2,2+\alpha}}\le C'\left(f_{\max}, f_{\min}, \epsilon, \sum_{l=1}^N\|u_l\|_{C^{1+\alpha/2,2+\alpha}}\right)\|P_n\|_{C^{2+\alpha}}.
	\end{equation}
	A useful fact about the constant $C'$ is that it is non-decreasing in its last argument.

	We construct by induction a sequence of solutions as follows. Start by $p^0_n(t, x) = P_n(x)$, $t\in [0, T], 1\le n\le N$ and $x\in\R$. For $m\ge 1$, let $p^m_n\in C^{1+\alpha/2,2+\alpha} $, for each $1\le n\le N$, be the solution of
	\begin{equation}
		\begin{cases}
			\partial_tp^m_n = L_{n, p^{m-1}}p^m_n,                    \\

			p^m_n / \Prob{Y = n}\textrm{is a probability density,} \\
			p^m_n(0, x) = P_n(x),\ x\in\R.
		\end{cases}
	\end{equation}

	Using estimate \eqref{eq:est-ussr}, we have for each $m\ge 1$
	\begin{equation}
		\sum_{n=1}^N\|p^m_n\|_{C^{1+\alpha/2,2+\alpha}}\le C'\left(f_{\max}, f_{\min}, \epsilon, \sum_{l=1}^N\|p^{m-1}_l\|_{C^{1+\alpha/2,2+\alpha}}\right)\sum_{n=1}^N\|P_n\|_{C^{2+\alpha}}.
	\end{equation}

	Under the assumption $\sum_{n=1}^N\|P_n\|_{2+\alpha}\le C:=1\wedge \inv{C'(f_{\max},f_{\min}, \epsilon, 1)}$, we have by induction that $\sum_{n=1}^N\|p^m_n\|_{C^{1+\alpha/2,2+\alpha}}\le 1$ for each $m\ge 1$. Indeed, the result holds for $m=0$. If $\sum_{n=1}^N\|p^{m-1}_n\|_{C^{1+\alpha/2,2+\alpha}}\le 1$ then
	\begin{equation}
		\sum_{n=1}^N\|p^m_n\|_{C^{1+\alpha/2,2+\alpha}}\le C'\left(f_{\max}, f_{\min}, \epsilon, 1\right)\sum_{n=1}^N\|P\|_{C^{2+\alpha}}\le 1.
	\end{equation}
	We proved that $(p^m)_{m\ge 0}$ is a bounded sequence of elements of $\left(C^{1+\alpha/2,2+\alpha}\right)^N$, and similarly to what is done in \cite[Proposition 2.2]{jourdain1998propagation},  we can extract a limit point $(p_n)_{1\le n\le N}\in (C^{1+\alpha/4,2+\alpha/2})^N$, such that $p_n / \Prob{Y=n}$ is a density of probability and $(p_n)_{1\le n\le N}$ solves Problem \eqref{SPDE}.

	Now, for each $1\le n\le N$, $(p_m)_{1\le m\le N}$ being regular, there exists a strong solution $(X_t, Y_t)_{t\in [0,T]}$ to the SDE
	\begin{equation}
		\begin{split}
			X_t = X_0 &- \inv{2}\int_0^t\frac{\epsilon + f(Y)\sum_{m=1}^N p_m(s, X^n_s)}{\epsilon +\sum_{m=1}^Nf(m)p_m(s, X^n_s)}\sigma(t,X_s)^2ds\\
			&+\int_0^t\sqrt{\frac{\epsilon + f(Y)\sum_{m=1}^N p_m(s, X^n_s)}{\epsilon + \sum_{m=1}^Nf(m)p_m(s, X^n_s)}}\sigma(t,X_s)dB_s.
		\end{split}
	\end{equation}
	Moreover, for each $1\le n\le N$ and $t\in [0,T]$, the law of $X_t1_{Y=n}$ admits a density $q_n(t,\cdot)$ solving $\partial_tq_n = L_{n, p}q_n$, with initial data $P_n$. The solution of this problem being unique and given by $p_n$, we conclude that $q_n = p_n$ for all $1\le n\le N$.
\end{proof}

\subsection{Uniqueness}\label{subsec:uniqueness}
Uniqueness is proved by first writing system \eqref{SPDE} in divergence form and then proving the uniform ellipticity of the differential operator.
For all $u\in C^1(\R\times \R_+)^N$, we can rewrite
\begin{equation}
	\partial_x[B^\epsilon(u)u] = A^\epsilon(u)\partial_xu,
\end{equation}
where $(A^\epsilon_{nk})_{1\le n,k\le N}: \R_+^{N}\to \R^{N\times N}$ is defined by
\begin{equation}
	A^\epsilon_{nn}(u):= B^\epsilon_n(u) + u_n\frac{f(n)\sum_{l=1}^N(f(l)-f(n))u_l}{(\epsilon + \sum_{l=1}^Nf(l)u_l)^2},\ 1\le n\le N,
\end{equation}
\begin{equation}
	A^\epsilon_{nk}(u):=u_n\frac{\epsilon(f(n)-f(k)) + f(n)\sum_{l=1}^N(f(l)-f(k))u_l}{(\epsilon + \sum_{l=1}^Nf(l)u_l)^2},\ 1\le n\neq k\le N.
\end{equation}


In these terms, PDEs \eqref{SPDE} can be rewritten
\begin{equation}
	\begin{cases}
		\partial_tp = \inv{2}\partial_x[\sigma^2A^\epsilon(p)\partial_xp] + \partial_x[\left(\sigma\partial_x\sigma +\inv{2}\sigma^2\right)B^\epsilon(p)p],\ (t,x)\in [0,T]\times\R, \\

		p_n / \Prob{Y = n}\textrm{is a probability density,}                                                                                                                      \\
		p_n(0,x) = P_n(x),\ x\in\R,1\le n\le N.
	\end{cases}
\end{equation}

We recall the following result of \cite[Proof of Proposition 2.3, Corollary B.3]{jourdain2020existence}.
\begin{prop}\label{prop:JZ}
	Let
	\begin{equation}
		\kappa_0 := \inv{2}\left[N + 1 - \max_{1\le k\le N} \sqrt{\sum_{n=1,n\neq k}^Nf(i)\sum_{n=1,n\neq k}^N\inv{f(n)}}\right].
	\end{equation}
	Then for all $\delta > 0$, $\eta>0$, $u\in (\R_+^*)^N$, $U\in \textbf{1}\R$ and $V\in \textbf{1}^\perp$, $\textbf{1} = (1)_{1\le n\le N}$,
	\begin{equation}
		\begin{split}
			\langle U + V, (J + \delta I)A^0(u)(U+V)\rangle &\ge \frac{N}{2}\left[N - \delta\left(1+\frac{f_{\max}}{f_{\min}}\right)\left(1 + \inv{2\eta}\right)\right]\|U\|_2^2\\
			&+ \delta\left(\kappa_0 - N^2\left(1 + \frac{f_{\max}}{f_{\min}}\right)\eta\right)\|V\|_2^2,
		\end{split}
	\end{equation}
	where $I = (1_{i=j})_{1\le i,j\le N}$ and $J = (1)_{1\le i,j\le N}$.
\end{prop}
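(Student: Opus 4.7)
The plan is to exploit the orthogonal decomposition $\R^N = \mathbf{1}\R \oplus \mathbf{1}^\perp$ together with a key algebraic identity: the matrix $A^0(u)$ is column-stochastic, $\mathbf{1}^T A^0(u) = \mathbf{1}^T$. This is verified by direct computation; for each column index $k$, the sum $\sum_n A^0_{nk}(u)$ simplifies via $\sum_l (f(l)-f(k)) u_l = \sum_l f(l)u_l - f(k)\sum_l u_l$ and the definition of $B^0_k$ to $B^0_k(u) + (1 - B^0_k(u)) = 1$. Algebraically, this identity encodes mass conservation for the PDE system \eqref{SPDE}.

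Column-stochasticity gives a clean expression for the $J$-part: since $J = \mathbf{1}\mathbf{1}^T$, writing $W = U + V$,
\[
\langle W, JA^0(u)W\rangle = (\mathbf{1}^T W)(\mathbf{1}^T A^0(u) W) = (\mathbf{1}^T W)^2 = (\mathbf{1}^T U)^2,
\]
using $V \perp \mathbf{1}$. This produces the $\|U\|_2^2$-leading term on the right-hand side. The residual $\delta \langle W, A^0(u) W\rangle$ is then expanded bilinearly into a pure $UU$-piece, a mixed $UV$-piece, and a pure $VV$-piece.

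The main obstacle is the coercivity estimate for $A^0(u)$ restricted to $\mathbf{1}^\perp$: show that $\langle V, A^0(u) V\rangle \ge \kappa_0 \|V\|_2^2$ uniformly in $u\in (\R_+^*)^N$ for $V \in \mathbf{1}^\perp$. I would exploit $\sum_n V_n = 0$ to simplify: writing $S = \sum_l f(l) u_l$ and $T = \sum_l u_l$, a direct calculation yields
\[
\langle V, A^0(u) V\rangle = \tfrac{T}{S}\sum_n f(n) V_n^2 - \tfrac{T}{S^2}\Bigl(\sum_n V_n u_n f(n)\Bigr)\Bigl(\sum_k V_k f(k)\Bigr),
\]
where the identity $\sum_l(f(l)-f(k))u_l = S - T f(k)$ combined with $\sum_k V_k = 0$ kills one term and leaves only the product structure shown. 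Bounding the correction against the positive part via a Cauchy--Schwarz-type inequality in weighted inner products, and optimizing over the choice of a coordinate $k$ to remove (which corresponds to the extremal direction in $\mathbf{1}^\perp$), produces the combinatorial expression $\max_k \sqrt{(\sum_{n\neq k} f(n))(\sum_{n\neq k} 1/f(n))}$ and thus the constant $\kappa_0$.

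The final step is assembly. The cross term $2\delta \langle U, A^0(u) V\rangle$ is treated by Young's inequality with parameter $\eta$: bound by $\tfrac{\delta}{2\eta}\|U\|_2^2 M^2 + \tfrac{\eta \delta}{2}\|V\|_2^2$, where $M$ is a uniform operator-norm bound on $A^0(u)$ of order $1 + f_{\max}/f_{\min}$, obtained from the explicit entrywise formulas. Combining the $J$-part, the coercivity on $\mathbf{1}^\perp$, and the Young-inequality bound on the cross term yields the stated two-term lower bound, with $\eta$ parametrizing the trade-off between cross-term absorption into $\|U\|_2^2$ and residual coercivity on $\|V\|_2^2$.
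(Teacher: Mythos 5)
A preliminary remark: the paper does not prove this proposition; it recalls it from \cite[Corollary B.3]{jourdain2020existence} (``We recall the following result of\dots''), so there is no internal proof to compare against and your attempt must stand on its own.

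Your scaffold --- column-stochasticity $\mathbf{1}^T A^0(u)=\mathbf{1}^T$ (which is indeed the paper's aside $\sum_n M_{nk}=0$ in the proof of Proposition \ref{prop:Gamma}), the $\mathbf{1}\R\oplus\mathbf{1}^\perp$ split, and Young's inequality for the cross term --- is the right shape. However, the step that carries the entire content of the proposition, the uniform coercivity $\langle V,A^0(u)V\rangle\ge\kappa_0\|V\|_2^2$ on $\mathbf{1}^\perp$, is left as a gesture. You correctly simplify the quadratic form to $\frac{T}{S}\sum_n f(n)V_n^2-\frac{T}{S^2}\bigl(\sum_n V_nu_nf(n)\bigr)\bigl(\sum_k V_kf(k)\bigr)$ using $\sum_k V_k=0$, but ``Cauchy--Schwarz in weighted inner products and optimizing over a coordinate $k$'' does not explain how the specific constant $\kappa_0=\tfrac12\bigl[N+1-\max_k\sqrt{(\sum_{n\ne k}f(n))(\sum_{n\ne k}1/f(n))}\bigr]$ appears, nor why the bound is uniform over all weights $u\in(\R_+^*)^N$. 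That estimate is precisely the cited Corollary B.3 and is the nontrivial combinatorial heart of the statement; as written it is asserted, not proved. Two further issues: (i) $A^0$ is not symmetric, so writing the cross term as $2\delta\langle U,A^0V\rangle$ is incorrect; by column-stochasticity $\langle U,A^0V\rangle=c\,\mathbf{1}^T A^0 V=c\,\mathbf{1}^T V=0$, and the surviving cross term is $\delta\langle V,A^0U\rangle=c\delta\,V^T A^0\mathbf{1}$, which involves the (non-stochastic) row sums and is where the $(1+f_{\max}/f_{\min})$ factors come from. (ii) Your $J$-part yields $\langle W,JA^0W\rangle=(\mathbf{1}^T U)^2=N\|U\|_2^2$, while the stated leading coefficient is $\frac{N}{2}[N-\delta(\cdots)]\to\frac{N^2}{2}$ as $\delta\to0$; for $N\ge3$ this is strictly larger than $N$, so the claim that the $J$-part ``produces the $\|U\|_2^2$-leading term'' is not verified and this discrepancy must be reconciled, either by extracting more from the argument or by rechecking the constant against the source.
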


We prove the following result.

\begin{prop}\label{prop:Gamma}
	Under Condition \ref{cond:f}, there exist a matrix $\Gamma\in S^{++}$ and $\kappa>0$ such that for all $u\in \R_+^N$ and $X\in \R^N$
	\begin{equation}
		\langle X, \Gamma A^\epsilon(u)X\rangle\ge \kappa\|X\|_2^2.
	\end{equation}
\end{prop}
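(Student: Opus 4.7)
The plan is to take $\Gamma = J + \delta I$ for a small $\delta > 0$ to be chosen, and to extend Proposition~\ref{prop:JZ} to the $\epsilon$-regularized matrix $A^\epsilon(u)$ via a convex-combination decomposition. First I would establish the column-sum identity: from the definition of $B^\epsilon_n$ one checks directly that $\sum_{n=1}^N u_n B^\epsilon_n(u) = \sum_{n=1}^N u_n$, and since $A^\epsilon(u)$ is the Jacobian of $u \mapsto (u_n B^\epsilon_n(u))_n$, differentiating in $u_k$ gives $\sum_n A^\epsilon_{nk}(u) = 1$, i.e.\ $J A^\epsilon(u) = J$. Decomposing $X = U + V$ with $U \in \textbf{1}\R$ and $V \in \textbf{1}^\perp$, this yields
\[
\langle X, (J + \delta I) A^\epsilon(u) X\rangle = N\|U\|_2^2 + \delta\langle X, A^\epsilon(u) X\rangle,
\]
so the problem reduces to a lower bound on $\langle X, A^\epsilon(u) X\rangle$ which loses only a controlled amount of $\|U\|^2$ and keeps a strictly positive coefficient in front of $\|V\|^2$.

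Writing $S = \sum_l u_l$ and $F = \sum_l f(l) u_l$, the case $u = 0$ gives $A^\epsilon(0) = I$ and the inequality is immediate. For $F > 0$, a direct computation yields the decomposition
\[
A^\epsilon(u) = \frac{F}{\epsilon+F}\,A^0(u) + \frac{\epsilon}{\epsilon+F}\,I + R^\epsilon(u),
\]
where $R^\epsilon$ is the rank-one matrix $R^\epsilon_{nk}(u) = \frac{\epsilon f(k) u_n(f(n)S - F)}{F(\epsilon+F)^2}$. Applying Proposition~\ref{prop:JZ} to the $A^0$-piece (after approximating $u$ by $u + \eta'\textbf{1} \in (\R_+^*)^N$ and letting $\eta'\to 0$) and $\langle X, IX\rangle = \|X\|^2$ to the identity piece, the convex weights $F/(\epsilon+F)$ and $\epsilon/(\epsilon+F)$ combine the $\kappa_0$-bound on $\langle V, A^0 V\rangle$ with the trivial bound on $\langle V, IV\rangle = \|V\|^2$, producing the combined coefficient $\kappa_0 \wedge 1$ for $\|V\|^2$. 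This is precisely the source of the "$\wedge\, 1$" in Condition~\ref{cond:f}.

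For the rank-one remainder, I would write $R^\epsilon = \frac{\epsilon}{F(\epsilon+F)^2}\,a\otimes f$ with $a_n = u_n(f(n)S - F)$. Since $\sum_n a_n = 0$ and $V \in \textbf{1}^\perp$, one has $\langle f, V\rangle = \langle f - \bar{f}\,\textbf{1}, V\rangle$ bounded by $\sqrt{\sum_n(f(n)-\bar{f})^2}\,\|V\|_2$. Combining this with the pointwise bound $|f(n)S - F| \le (f_{\max} - f_{\min})S$, the AM--GM estimate $\epsilon S^2/[F(\epsilon+F)^2] \le 1/(4 f_{\min}^2)$, and a Young splitting of the bilinear form, careful bookkeeping would yield
\[
\langle X,(J+\delta I) R^\epsilon(u) X\rangle \ge -\delta\Sigma\|V\|^2 + O(\|U\|^2), \qquad \Sigma := \frac{1}{f_{\min}}\sqrt{\sum_n(f(n)-\bar{f})^2} + \frac{f_{\max}-f_{\min}}{f_{\min}},
\]
with the $O(\|U\|^2)$ term absorbed into the $\|U\|^2$-coefficient. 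The coefficient of $\|V\|^2$ thus becomes $\delta\bigl[(\kappa_0\wedge 1) - \Sigma - N^2(1+f_{\max}/f_{\min})\eta\bigr]$, which is strictly positive for $\eta$ small by Condition~\ref{cond:f}; the $\|U\|^2$-coefficient is then made strictly positive by choosing $\delta$ small enough, giving $\kappa > 0$ and $\Gamma = J + \delta I \in S^{++}$.

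The hard part will be the sharp estimate of $R^\epsilon$: extracting exactly $\Sigma$ as an \emph{additive} combination of $\sqrt{\sum(f-\bar{f})^2}/f_{\min}$ and $(f_{\max}-f_{\min})/f_{\min}$ requires a Young-type splitting of the product $\langle a, V\rangle\langle f-\bar{f}\,\textbf{1}, V\rangle$ with weights tuned to the parameter $\eta$ already appearing in Proposition~\ref{prop:JZ}, rather than a single Cauchy--Schwarz (which would produce the two factors multiplicatively). A related delicate point is handling the boundary behaviour as $F \downarrow 0$ uniformly in $u$, which is where the prefactor $\epsilon S^2/[F(\epsilon+F)^2]$ must be controlled without blow-up.
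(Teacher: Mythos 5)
Your proposal is correct, and it is a genuinely different route from the paper's. The paper decomposes $A^\epsilon(u) = I + \rho^2 M + \rho(1-\rho)D$ (with $\rho = F/(\epsilon+F)$, $M = A^0(u) - I$, and $D$ a matrix whose diagonal is $B^0(u)-\mathbf{1}$) and handles the $\rho^2$ weight on $M$ by splitting into the two cases $\langle X, MX\rangle \le 0$ and $\langle X, MX\rangle > 0$, then estimates $\langle X, DX\rangle$ term by term to obtain the additive $\beta(f)$ appearing in Condition~\ref{cond:f}. Your decomposition $A^\epsilon = \rho A^0 + (1-\rho)I + R^\epsilon$ is an algebraic rearrangement of the same identity (indeed $R^\epsilon = \rho(1-\rho)(D - M)$), but the linear convex weights avoid the case split entirely, and your observation $JA^\epsilon = J$ cleanly kills the $J$-part of $\Gamma$ applied to the remainder. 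Exploiting that $R^\epsilon$ is rank one, $R^\epsilon = \frac{\epsilon}{F(\epsilon+F)^2}\,a\otimes f$ with $a_n = u_n(f(n)S - F)$ and $\sum_n a_n = 0$, is the most substantive departure from the paper. One small correction to your "hard part": you do not actually need a Young splitting to extract $\Sigma$ additively. A single Cauchy--Schwarz together with $\|a\|_2 \le (f_{\max}-f_{\min})S\|u\|_2 \le (f_{\max}-f_{\min})S^2$ and your AM--GM bound yields a $\|V\|^2$-loss of at most $\frac{f_{\max}-f_{\min}}{4f_{\min}^2}\sqrt{\sum_n(f(n)-\bar f)^2}$, which is the \emph{product} of the two ingredients of $\Sigma$ divided by $4$; since Condition~\ref{cond:f} forces $\Sigma < 1$, each factor is $< 1$ and the product is strictly smaller than the sum, so the multiplicative bound is already sufficient (and in fact sharper than what the paper's $\beta(f)$ requires). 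The remaining cross term $\langle a,V\rangle\langle f,U\rangle$ is genuinely of order $\|U\|\|V\|$ and is absorbed, as you say, via Young with the same $\eta$ that appears in Proposition~\ref{prop:JZ}, followed by taking $\eta$ and then $\delta$ small. So the plan is sound, and the feared delicate step is actually easier than you anticipated.
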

\begin{proof}
	If $\sum_{n=1}^Nu_n = 0$ then $A^\epsilon(u) = I$ and the result holds. Assume that $\sum_{n=1}^Nu_n > 0$ and set
	\begin{equation}
		\rho:= \frac{\sum_{l=1}^Nf(l)u_l}{\epsilon + \sum_{l=1}^Nf(l)u_l} \in [0, 1].
	\end{equation}

	Rewrite
	\begin{equation}
		\begin{split}
			A^\epsilon_{nn}(u) &= B^\epsilon_n(u) + u_n\frac{f(n)\sum_{l=1}^N(f(l)-f(n))u_l}{(\epsilon + \sum_{l=1}^Nf(l)u_l)^2}\\
			&= 1-\rho + \rho B_n^0(u) + \rho^2 u_n\frac{f(n)\sum_{l=1}^N(f(l)-f(n))u_l}{(\sum_{l=1}^Nf(l)u_l)^2}\\
			&= (1-\rho)(1 + \rho B^0_n(u)) + \rho^2A^0_{nn}(u),
		\end{split}
	\end{equation}
	and
	\begin{equation}
		\begin{split}
			A^\epsilon_{nk}(u) &= u_n\frac{\epsilon(f(n)-f(k)) + f(n)\sum_{l=1}^N(f(l)-f(k))u_l}{(\epsilon + \sum_{l=1}^Nf(l)u_l)^2}\\
			&=  \rho(1-\rho)\frac{(f(n)-f(k))u_n}{\sum_{l=1}^Nf(l)u_l}+ \rho^2A^0_{nk}(u).
		\end{split}
	\end{equation}

	Define the matrix $D\in\R^{N\times N}$ by
	\begin{equation}
		D_{nk} := \frac{(f(n)-f(k))u_n}{\sum_{l=1}^Nf(l)u_l},\ 1\le n\neq k\le N,
	\end{equation}
	and
	\begin{equation}
		D_{nn} := -\sum_{m\neq n}D_{mn} = \frac{\sum_{m=1}^N(f(n)-f(m))u_m}{\sum_{l=1}^Nf(l)u_l} = B^0_n(u) - 1,\ 1\le n\le N.
	\end{equation}
	We see immediately that $\sum_{n=1}^ND_{nk} = 0$, $JD=0$, and that
	\begin{equation}
		\max_{1\le n,k\le N}|D_{nk}(u)|\le \frac{f_{\max}-f_{\min}}{f_{\min}}.
	\end{equation}

	In view of
	\begin{equation}
		(1-\rho)(1+\rho B^0_n) - D_{nn} = (1-\rho)(1+\rho B^0_n(u)) - \rho(1-\rho)(B^0_m(u)-1) = 1-\rho^2,
	\end{equation}
	we find that
	\begin{equation}
		A^\epsilon(u) = \rho^2 A^0(u) + (1-\rho^2)I + \rho(1-\rho)D.
	\end{equation}

	Set $M:= A^0(u)- I$. For all $1\le k\le N$, $\sum_{n=1}^NM_{nk}= 0$  according to \cite[Proof of Lemma 3.13]{jourdain2020existence} and therefore $JM = 0$. The previous equation can be written
	\begin{equation}
		A^\epsilon(u) = I + \rho^2M + \rho(1-\rho)D.
	\end{equation}

	Exploiting the idea of \cite[Proof of Proposition 2.3]{jourdain2020existence}, we are going to show that for $\delta\in (0, 1)$ small enough, there exists $\kappa>0$ such that for all $X\in\R^N$ and $u\in (\R^*_+)^N$
	\begin{equation}
		\langle X, (J+\delta I)A^\epsilon(u) X\rangle\ge \kappa\|X\|^2_2.
	\end{equation}

	Let $\delta\in (0, 1)$ and $X\in\R^N$. We decompose
	\begin{equation}
		(J+\delta I)A^\epsilon(u) = J+\delta I + \rho^2\delta M  + \rho(1-\rho)\delta D.
	\end{equation}

	If $\langle X, MX\rangle \le 0$, then using $\rho\in [0,1]$,
	\begin{equation}
		\begin{split}
			\langle X, 	(J+\delta I)A^\epsilon(u) X\rangle &= \langle X, (J+\delta I)X\rangle + \rho^2\delta\langle X, MX\rangle + \rho(1-\rho)\delta\langle X, DX\rangle\\
			&\ge \langle X, (J+\delta I)X\rangle + \delta\langle X, MX\rangle + \rho(1-\rho)\delta\langle X, DX\rangle\\
			&= \langle X, (J+\delta I)A^0X\rangle + \rho(1-\rho)\delta\langle X, DX\rangle.
		\end{split}
	\end{equation}

	If $\langle X, MX\rangle > 0$ then
	\begin{equation}
		\langle X, 	(J+\delta I)A^\epsilon(u) X\rangle\ge \langle X, (J+\delta I)X\rangle + \rho(1-\rho) \delta\langle X, DX\rangle.
	\end{equation}

	Write $X = U + V$ where $U\in \textbf{1}\R$ and $V\in \textbf{1}^\perp$. Then $DU = 0$ and
	\begin{equation}
		\langle X, DX\rangle =  \langle V, DV\rangle  + \langle U, DV\rangle.
	\end{equation}

	On the one hand,
	\begin{equation}
		\begin{split}
			\langle V, DV\rangle &= \sum_{n=1}^ND_{nn}V^2_n + \sum^N_{n, k=1}\frac{u_n(f(n)-f(k))}{\sum_{l=1}^Nf(l)u_l}V_nV_k\\
			&= \sum_{n=1}^ND_{nn}V^2_n + \sum^N_{n, k=1}\frac{u_n((f(n)-\bar{f})-(f(k)-\bar{f}))}{\sum_{l=1}^Nf(l)u_l}V_nV_k\\
			&= \sum_{n=1}^ND_{nn}V^2_n - \frac{\sum_{n,k=1}^N u_n (f(k)-\bar{f})V_nV_k}{\sum_{l=1}^Nf(l)u_l},
		\end{split}
	\end{equation}
	where we used that $\sum_{k=1}^NV_k=0$ between the second and third line.

	We estimate
	\begin{equation}
		\begin{split}
			\left|\sum_{n,k=1}^N u_n(f(k)-\bar{f})V_nV_k\right| &= \left|\sum_{n=1}^N u_nV_n\sum_{k=1}^N(f(k)-\bar{f})V_k\right|\\
			&\le \sqrt{\sum_{n=1}^Nu_n^2}\sqrt{\sum_{n=1}^N(f(n)-\bar{f})^2}\|V\|_2^2.
		\end{split}
	\end{equation}

	Using that $u_n> 0$ for $1\le n\le N$
	\begin{equation}
		\frac{\sqrt{\sum_{n=1}^Nu_n^2}}{{\sum_{l=1}^N f(l)u_l}}\le \inv{f_{\min}}\sqrt{\frac{\sum_{n=1}^Nu_n^2}{\left(\sum_{l=1}^N u_l\right)^2}}\le \inv{f_{\min}},
	\end{equation}
	we readily conclude that
	\begin{equation}\label{eq:V-MV}
		\rho(1-\rho)\delta \langle V, DV\rangle \ge -\delta\beta(f)\|V\|_2^2.
	\end{equation}
	where $\beta(f) := \left(\frac{f_{\max} -f_{\min}}{f_{\min}} +\inv{f_{\min}}\sqrt{\sum_{n=1}^N(f(n)-\bar{f})^2}\right)$,

	On the second hand, for all $\eta>0$
	\begin{equation}\label{eq:U-MV}
		\begin{split}
			\rho(1-\rho)\delta\langle U, DV\rangle &\ge - \delta \max_{1\le n,k\le N}|D_{nk}|N\|U\|_2\|V\|_2\\
			&\ge - \delta \beta(f)\sqrt{N}|\|U\|_2\|V\|_2\\
			&\ge - \delta\frac{\beta(f)N}{\eta}\|U\|^2_2 - \delta\beta(f) N^2\eta\|V\|_2^2.
		\end{split}
	\end{equation}

	Thus combining \eqref{eq:V-MV}, \eqref{eq:U-MV} and Proposition \ref{prop:JZ}, we infer in the case $\langle X, MX\rangle \le 0$
	\begin{equation}
		\begin{split}
			\langle X, (J + \delta I)A^\epsilon(u)X\rangle &\ge \frac{N}{2}\left[N - \delta\left[\left(1+\frac{f_{\max}}{f_{\min}}\right)\left(1 + \inv{2\eta}\right) +\frac{2\beta(f)}{\eta}\right]\right]\|U\|_2^2\\
			&+ \delta\left(\kappa_0 - \beta(f) - N^2\left(1 + \frac{f_{\max}}{f_{\min}} + \beta(f)\right)\eta\right)\|V\|_2^2.
		\end{split}
	\end{equation}

	Condition \ref{cond:f} yields $\kappa_0>\beta(f)$ and we can choose
	\begin{equation}
		\eta < \eta_{-} := \inv{2}\frac{(\kappa_0-\beta(f))}{N^2}\left(1+\frac{f_{\max}}{f_{\min}} + \beta(f)\right)^{-1},
	\end{equation}
	and
	\begin{equation}
		\delta < \delta_{-}(\delta):= \frac{N}{\left[\left(1+\frac{f_{\max}}{f_{\min}}\right)\left(1 + \inv{2\eta}\right) +\frac{2\beta(f)}{\eta}\right]}.
	\end{equation}

	We check that with such choice, if $\langle X, MX\rangle\le 0$
	\begin{equation}
		\langle X, (J + \delta I)A^\epsilon(u)X\rangle \ge \kappa_{-}\|X\|_2^2,
	\end{equation}
	where
	\begin{equation}
		\kappa_{-}:= \min\left( \frac{N}{2}\left[N - \delta\left[\left(1+\frac{f_{\max}}{f_{\min}}\right)\left(1 + \inv{2\eta}\right) +\frac{2\beta(f)}{\eta}\right]\right] ,\frac{\delta}{2}(\kappa_0-\beta(f))\right) > 0.
	\end{equation}

	Now in the case where $\langle X, MX\rangle > 0$
	\begin{equation}
		\begin{split}
			\langle X, (J+\delta I)A^\epsilon(u)X\rangle&\ge N\left[1 - \beta(f)\frac{\delta}{\eta}\right]\|U\|_2^2\\
			&+ \delta\left[1-\beta(f) - \beta(f)N^2\eta\right] \|V\|_2^2.
		\end{split}
	\end{equation}

	Therefore, under Condition \ref{cond:f}, $1 > \beta(f)$, and we can choose
	\begin{equation}
		\eta < \eta_+ := \inv{2}\frac{1-\beta(f)}{\beta(f)N^2},
	\end{equation}
	and
	\begin{equation}
		\delta < \delta_+(\eta) := \eta\beta(f).
	\end{equation}

	Thus
	\begin{equation}
		\langle U + V, (J + \delta I)A^\epsilon(u)(U+V)\rangle \ge \kappa_{+}\|X\|_2^2,
	\end{equation}
	where
	\begin{equation}
		\kappa_+ :=\min\left(N\left[1 - \beta(f)\frac{\delta}{\eta}\right],
		\frac{\delta}{2}\left[1-\beta(f)\right]\right) > 0.
	\end{equation}

	To conclude, take $\eta < \min(\eta_-,\eta_+)$ and $\delta < \min(\delta_-(\eta),\delta_+(\eta))$, to obtain
	\begin{equation}
		\langle X, (J+\delta I)A^\epsilon(u)X\rangle \ge \kappa\|X\|_2^2,
	\end{equation}
	where $\kappa := \min(\kappa_-, \kappa_+) > 0$.
\end{proof}
We are now ready to prove uniqueness.

\begin{prop}\label{prop:uniqueness}
	Under Condition \ref{cond:f}, there exists at most one solution in $C^{1+\alpha/4, 2+\alpha/2}$ to Problem \eqref{SPDE}.
\end{prop}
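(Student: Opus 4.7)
The strategy is a standard $L^2$-energy argument, symmetrized through the matrix $\Gamma$ provided by Proposition \ref{prop:Gamma}. Let $p,\tilde p\in (C^{1+\alpha/4,2+\alpha/2})^N$ be two solutions to \eqref{SPDE} and set $w=p-\tilde p$, which satisfies $w(0,\cdot)\equiv 0$ and $\int w_n(t,x)\,dx=0$. Subtracting the two divergence-form equations and using the decompositions
\begin{equation*}
A^\epsilon(p)\partial_x p - A^\epsilon(\tilde p)\partial_x\tilde p = A^\epsilon(p)\partial_x w + \bigl(A^\epsilon(p)-A^\epsilon(\tilde p)\bigr)\partial_x\tilde p,
\end{equation*}
\begin{equation*}
B^\epsilon(p)p - B^\epsilon(\tilde p)\tilde p = B^\epsilon(p)w + \bigl(B^\epsilon(p)-B^\epsilon(\tilde p)\bigr)\tilde p,
\end{equation*}
yields a linear parabolic system for $w$ with leading diffusion $\tfrac12\partial_x[\sigma^2 A^\epsilon(p)\partial_x w]$ plus three error terms. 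Because $\epsilon>0$, the denominator $\epsilon+\sum_l f(l)u_l$ appearing in $A^\epsilon,B^\epsilon$ is bounded below by $\epsilon$; combined with the $L^\infty$-bounds inherited from $C^{1+\alpha/4,2+\alpha/2}$, the maps $A^\epsilon,B^\epsilon$ are uniformly Lipschitz along the segment joining $p$ to $\tilde p$.

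Next I would test against $\Gamma w$ in $L^2_x$. Since $\Gamma$ is symmetric and constant, integration by parts gives
\begin{equation*}
\tfrac12\frac{d}{dt}\int\langle w,\Gamma w\rangle\,dx + \tfrac12\int\sigma^2\langle\partial_x w,\Gamma A^\epsilon(p)\partial_x w\rangle\,dx = \mathcal R(t),
\end{equation*}
where $\mathcal R(t)$ collects the three error contributions. By Proposition \ref{prop:Gamma} and $\sigma\ge\sigma_0$, the dissipative term dominates $\tfrac{\kappa\sigma_0^2}{2}\int\|\partial_x w\|_2^2\,dx$. After integration by parts each piece of $\mathcal R$ has the form $\int g(t,x)\|\partial_x w\|_2\|w\|_2\,dx$ with $g$ controlled by $\|\sigma\|_{C^1_b}$, the $C^{1+\alpha/4,2+\alpha/2}$-norms of $p,\tilde p$, and the Lipschitz constant of $A^\epsilon,B^\epsilon$. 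Young's inequality $ab\le\eta a^2+(4\eta)^{-1}b^2$ with $\eta$ small absorbs a fraction of the dissipation and leaves a remainder bounded by $C\int\|w\|_2^2\,dx$. Since $\Gamma\in S^{++}$ implies $\int\langle w,\Gamma w\rangle\,dx$ is equivalent to $\int\|w\|_2^2\,dx$, we reach a scalar inequality
\begin{equation*}
\frac{d}{dt}\int\langle w,\Gamma w\rangle\,dx \le C'\int\langle w,\Gamma w\rangle\,dx,
\end{equation*}
and Grönwall together with $w(0,\cdot)\equiv 0$ forces $w\equiv 0$ on $[0,T]$.

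The main obstacle will be rigorously justifying the integration by parts, since membership in $C^{1+\alpha/4,2+\alpha/2}$ does not by itself give spatial decay. One always has $p_n\in L^1\cap L^\infty\subset L^2$ because $p_n/\mathbb P(Y{=}n)$ is a probability density, but the finiteness of $\int\|\partial_x w\|_2^2\,dx$ and the vanishing of the boundary terms require either a spatial cutoff (test against $\chi_R^2\Gamma w$ with $\chi_R$ supported in $[-2R,2R]$ and send $R\to\infty$, controlling the commutator by $\|\chi_R'\|_\infty$ together with the $L^1$ mass bound on $p,\tilde p$) or a preliminary decay estimate on $w$ coming from the regularity theory used in Proposition \ref{prop:existence}. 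Once this technical step is handled, the structural core of the argument is Proposition \ref{prop:Gamma}: Condition \ref{cond:f} is exactly what makes the symmetrizer $\Gamma$ exist, and without it the non-symmetric matrix $A^\epsilon(u)$ need not be coercive so the standard $L^2$ energy method would fail.
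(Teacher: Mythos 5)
Your proposal is correct and takes essentially the same route as the paper: both use the symmetrizer $\Gamma$ from Proposition \ref{prop:Gamma}, the same splittings $A^\epsilon(p)\partial_x p - A^\epsilon(\tilde p)\partial_x\tilde p = A^\epsilon(p)\partial_x w + (A^\epsilon(p)-A^\epsilon(\tilde p))\partial_x\tilde p$ and analogously for $B^\epsilon$, Young's inequality to absorb the gradient terms into the coercive dissipation, and Gr\"onwall. The only cosmetic difference is that the paper changes unknowns to $p' = \sqrt{\Gamma}\,p$ and $q' = \sqrt{\Gamma}\,q$ before testing against $p'-q'$, which amounts exactly to your choice of testing $w$ against $\Gamma w$. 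Regarding the integrability concern you raise: the paper handles it in one line by remarking that $\|\partial_x p_n\|_{L^2}^2 \le \|\partial_{xx}p_n\|_{L^\infty}\|p_n\|_{L^1}$ (integrating by parts), so $\partial_x p, \partial_x q \in C(0,T,L^2)^N$; your cutoff alternative would also work but is more than what is needed given that $p_n, q_n$ are sub-probability densities with bounded second derivatives.
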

\begin{proof}
	Let $p$ and $q$ be two solutions in $(C^{1+\alpha/4,2+\alpha/2})^N$. By integrating by parts, we have the estimate
	\begin{equation}
		\|\partial_x p_n\|_{L^2}^2 \le \|\partial_{xx}p_n\|_{L^\infty}\|p_n\|_{L^1},\ 1\le n\le N
	\end{equation}
	we see that $\partial_xp, \partial_xq \in C(0, T, L^2)^N$.

	Let $\Gamma\in S^{++}$ and $\kappa>0$ be given by Proposition \ref{prop:Gamma}, such that for all $u\in (\R_+)^N$
	\begin{equation}
		\langle X, \Gamma A^\epsilon(u)X\rangle\ge \kappa\|X\|^2_2.
	\end{equation}

	Set $p' = \sG p$ and $q' =\sG q$. $p'$ and $q'$ are in $C^{1+\alpha/4, 2+\alpha/2}$, and their spatial gradient in $C(0, T, L^2)$, and they solve respectively the system of PDEs
	\begin{equation}\label{SPDE:Gamma}
		\begin{cases}
			\partial_tp' = \inv{2}\partial_x[\sigma^2A'(p)\partial_xp'] + + \partial_x[(\sigma\partial_x\sigma+ \inv{2}\sigma^2) B'(p)p'],\ (t,x)\in[0,T]\times\R, \\
			p'_n(0, x) = \sG P_n(x),\ x\in\R,1\le n\le N,
		\end{cases}
	\end{equation}
	and
	\begin{equation}
		\begin{cases}
			\partial_tq' = \inv{2}\partial_x[\sigma^2A'(q)\partial_xq'] + \partial_x[(\sigma\partial_x\sigma + \inv{2}\sigma^2)B'(q)q'],\ (t,x)\in [0,T]\times\R, \\
			q'_n(0, x) = \sG P_n(x),\ x\in\R,1\le n\le N,
		\end{cases}
	\end{equation}
	where the operators $A'$ and $B'$ are respectively defined by $A'(u):=\sG A^\epsilon(u)\Sg $ and $B'(u) := \sG B^\epsilon(u)\Sg$, $u\in (\R_+)^N$. $A'$ satisfies the same coercivity property as $A$. Indeed, for all $X\in\R$ and  $u\in (\R_+)^N$
	\begin{equation}\label{eq:A'-coercivity}
		\langle \sG X, \sG A^\epsilon(u)\Sg\sG X\rangle = \langle X, \Gamma A^\epsilon(u)X\rangle\ge \kappa\|X\|_2^2\ge \tilde{\kappa}\|\sG X\|_2^2,
	\end{equation}
	where $\tilde{\kappa}:= \frac{\kappa}{N + \delta}$, $(N+\delta)^{-1}$ being the smallest eigenvalue of $\Gamma^{-1}$.

	Multiplying \eqref{SPDE:Gamma}  by $p'-q'$ and integrating  gives for all $t\in [0, T]$
	\begin{equation}
		\begin{split}
			\int \langle p'-q',\partial_tp'\rangle &= -\inv{2}\int \langle \partial_x(p'-q'), A'(p)\partial_xp'\rangle\\
			&- \int \left(\sigma\partial_x\sigma + \inv{2}\sigma^2\right)\langle \partial_x(p'-q'), B'(p)p'\rangle.
		\end{split}
	\end{equation}
	A similar equation holds for $q'$. Taking the difference with the previous equation and integrating over $[0,t]$ gives
	\begin{equation}
		\begin{split}
			\inv{2}\int\|p'-q'\|_2^2&= -\inv{2}\int_0^t\int \langle \partial_x(p'-q'), A'(p)\partial_xp' - A'(q)\partial_xq'\rangle dt\\
			&- \int_0^t\int \left(\sigma\partial_x\sigma + \inv{2}\sigma^2\right)\langle \partial_x(p'-q'), B'(p)p' - B'(q)q'\rangle dt.
		\end{split}
	\end{equation}

	Rewrite the integrand of the first integral of the right-hand side as
	\begin{equation}
		\begin{split}
			\langle \partial_x(p'-q'), A'(p)\partial_xp' - A'(q)\partial_xq'\rangle &=
			\langle \partial_x(p'-q'), A'(p)\partial_x(p'-q')\rangle\\
			&+\langle \partial_x(p'-q'), (A'(p)-A'(q))\partial_xq'\rangle\\
			&\ge -\tilde{\kappa}\|\partial_x(p'-q')\|_2^2\\
			&- C\|q\|_{(C^{1+\alpha/2, 2+\alpha})^N}\|\partial_x(p'-q')\|_2\|A'(p)-A'(q)\|_2.
		\end{split}
	\end{equation}

	Likewise for the second integral
	\begin{equation}
		\begin{split}
			\sigma\partial_x\sigma\langle \partial_x(p'-q'), B'(p)p' - B'(q)q'\rangle &=
			\sigma\partial_x\sigma\langle \partial_x(p'-q'), B'(p)(p'-q')\rangle \\
			&+ \sigma\partial_x\sigma \langle \partial_x(p'-q'), (B'(p)-B'(q))q'\rangle \\
			&\ge -C \|\partial_x(p'-q')\|_2\|p'-q'\|_2\\
			&- C\|q\|_{(C^{1+\alpha/2, 2+\alpha})^N}\|\partial_x(p'-q')\|_2\|B'(p)-B'(q)\|_2,
		\end{split}
	\end{equation}
	where we used the boundedness of $B'(p)$, $\sigma$ and $\partial_x\sigma$.

	We check easily the existence of a constant $C$ depending on $\Gamma$, $f_{\max}$, $f_{\min}$ and $\epsilon$ such that
	\begin{equation}
		\|A'(p) - A'(q)\|_2^2 + 	\|B'(p) - B'(q)\|_2^2\le C\|p'-q'\|_2^2.
	\end{equation}

	In the view of the upper-bounds
	\begin{equation}
		\|\partial_x(p'-q')\|_2\|A'(p)-A'(q)\|_2\le \frac{\kappa}{4}\|\partial_x(p'-q')\|_2^2 + \frac{4}{\kappa}\|A'(p)-A'(q)\|_2^2,
	\end{equation}
	\begin{equation}
		\|\partial_x(p'-q')\|_2\|p'-q'\|_2\le \frac{\kappa}{4}\|\partial_x(p'-q')\|_2^2 + \frac{4}{\kappa}\|p'-q'\|_2^2,
	\end{equation}
	and
	\begin{equation}
		\|\partial_x(p'-q')\|_2\|B'(p)-B'(q)\|_2\le \frac{\kappa}{4}\|\partial_x(p'-q')\|_2^2 + \frac{4}{\kappa}\|B'(p)-B'(q)\|^2_2,
	\end{equation}
	we conclude that for all $t\in [0, T]$ and some constant $C>0$
	\begin{equation}
		\int \|p'-q'\|^2_2\le C\|q\|_{(C^{1+\alpha/4,2+\alpha/2})^N}^2\int_0^t\int \|p'-q'\|^2_2dt.
	\end{equation}

	Gronwall's inequality yields $p'=q'$ a.e. on $[0,T]\times\R$ and by continuity of $p$ and $q$, we have immediately that $p=q$.

\end{proof}

\section{Propagation of chaos}\label{sec:chaos}
In this section, we prove the propagation of chaos and introduce an intermediate mollified version of SDE \eqref{lsv:SDE}. For each $M\ge 1$, let $(\Xt^{i,M}_t)_{i\ge 1, t\ge 0}$ be a particle system such that the i-th particle starts at $X^i_0$ and evolves according to the dynamics
\begin{equation}\label{SDEt}
	\begin{cases}
		\begin{aligned}
			d\Xt^{i,M}_t = & - \inv{2}\frac{\epsilon + f(Y^i)\sum_{n=1}^NW_{\delta_M}*\pt_n(t, \Xt^{i,M}_t)}{\epsilon + \sum_{n=1}^Nf(n)W_{\delta_M}*\pt_n(t, \Xt^{i,M}_t)}\sigma(t,\Xt^{i,M}_t)^2dt \\ &+  \sqrt{\frac{\epsilon + f(Y^i)\sum_{n=1}^NW_{\delta_M}*\pt_n(t, \Xt^{i,M}_t)}{\epsilon + \sum_{n=1}^Nf(n)W_{\delta_M}*\pt_n(t, \Xt^{i,M}_t)}}\sigma(t,\Xt^{i,M}_t)dB^i_t,
		\end{aligned} \\
		\Prob{\Xt^{i,M}_t\in dx\cap Y^i = n} = \pt_n(t, x)dx,
	\end{cases},
\end{equation}
where we denote
\begin{equation}
	W_{\delta_M}*\phi(x) = \int W_{\delta_M}(x-y)\phi(y)dy,\quad \phi\in L^2(\R),\ x\in\R.
\end{equation}


We introduce the PDE system associated to SDE \eqref{SDEt}
\begin{equation}\label{PDEt}
	\begin{cases}
		\partial_t \pt = \inv{2}\partial_{xx}\left[\sigma^2B^\epsilon(W*\pt)\pt\right] + \inv{2}\partial_x\left[\sigma^2B^\epsilon(W*\pt)\pt\right],\ (t, x)\in [0,T]\times\R, \\
		\tilde{p}_n (0,x) = P_n(x),\ x\in \R,                                                                                                                                  \\
		1\le n\le N.
	\end{cases}.
\end{equation}


The existence of $(\Xt^{i,M}_t)_{1\le i\le M,t\ge 0}$ is ensured by the following proposition.

\begin{prop}\label{prop:estimate-pt-ph}
	If $(P_n)_{1\le n\le N}$ satisfies the assumption of Theorem \ref{thm:existence-uniqueness}, then there exists a strong solution $(\Xt^{i,M}_t, \pt(t,\cdot))_{t\ge 0}$ to SDE \eqref{SDEh}. Moreover, $(\pt_n)_{1\le n\le N}$ belongs to the space $(C^{1+\alpha/4,2+\alpha/2})^N \cap C(0, T, L^2)^N$ and there exists a constant $C>0$ independent of $M$ and $\delta_M$ such that
	\begin{equation}\label{prop:estimate-pt}
		\sum_{n=1}^N\|\tilde{p}_n\|_{C^{1+\alpha/4,2+\alpha/2}}\le C,
	\end{equation}
	and
	\begin{equation}\label{prop:eq-estimate-pt-ph}
		\sup_{0\le t\le T}\int \|\pt - \ph\|_2^2\le C\delta_M,\ \forall t\ge 0.
	\end{equation}
\end{prop}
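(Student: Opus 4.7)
My plan is to establish the two assertions in sequence, the first by replaying the Picard iteration of Proposition \ref{prop:existence} and the second by replaying the energy estimate of Proposition \ref{prop:uniqueness}, in each case paying special attention to extracting bounds that are uniform in $M$.

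\textbf{Existence and the uniform bound \eqref{prop:estimate-pt}.} I would mimic the Picard construction of Proposition \ref{prop:existence} verbatim, the only modification being that the frozen coefficient $B^\epsilon(p^{m-1})$ is replaced by $B^\epsilon(W_{\delta_M}*\pt^{m-1})$. The key observation that makes the resulting a priori bound independent of $M$ is that convolution against the probability kernel $W_{\delta_M}$ is non-expansive on the parabolic H\"older spaces $C^{1+\alpha/2,2+\alpha}$: derivatives commute with convolution, and H\"older seminorms do not increase under convolution with a non-negative mass-one kernel. Consequently the Schauder estimate \eqref{eq:est-ussr} applied at each Picard step yields
\[
    \sum_{n=1}^N \|\pt^m_n\|_{C^{1+\alpha/2,2+\alpha}} \le C'\!\left(f_{\max},f_{\min},\epsilon,\sum_{l=1}^N\|\pt^{m-1}_l\|_{C^{1+\alpha/2,2+\alpha}}\right)\sum_{n=1}^N \|P_n\|_{C^{2+\alpha}},
\]
with $C'$ \emph{the same} function as in Proposition \ref{prop:existence}. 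The smallness assumption on $\sum_n \|P_n\|_{C^{2+\alpha}}$ then yields $\sum_n \|\pt^m_n\|_{C^{1+\alpha/2,2+\alpha}}\le 1$ by induction, and the extraction argument of \cite[Proposition 2.2]{jourdain1998propagation} produces a limit point $\pt\in (C^{1+\alpha/4,2+\alpha/2})^N$ solving \eqref{PDEt} and satisfying \eqref{prop:estimate-pt}. The strong solution $(\Xt^{i,M}_t)$ to SDE \eqref{SDEt} is then built from the now smooth and bounded coefficients exactly as at the end of the proof of Proposition \ref{prop:existence}.

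\textbf{$L^2$ comparison \eqref{prop:eq-estimate-pt-ph}.} I would adapt the energy estimate of Proposition \ref{prop:uniqueness} to the present situation, viewing $\pt$ as a perturbation of $\ph$. Writing
\[
    \partial_x[B^\epsilon(W_{\delta_M}*\pt)\pt] = \partial_x[B^\epsilon(\pt)\pt] + \partial_x\bigl[(B^\epsilon(W_{\delta_M}*\pt)-B^\epsilon(\pt))\pt\bigr],
\]
I decompose the PDE for $\pt$ as the PDE satisfied by $\ph$ plus a remainder $\mathcal{R}_M$. Setting $\pt':=\sG\pt$, $\ph':=\sG\ph$, multiplying the difference equation by $\pt'-\ph'$, integrating by parts on $[0,t]\times\R$, and invoking the coercivity of $\Gamma A^\epsilon$ from Proposition \ref{prop:Gamma} as in \eqref{eq:A'-coercivity}, I obtain
\[
    \int\|\pt'-\ph'\|_2^2(t) + \tilde\kappa\int_0^t\!\!\int\|\partial_x(\pt'-\ph')\|_2^2 \le C\int_0^t\!\!\int\|\pt'-\ph'\|_2^2 + \text{Err}_M,
\]
where $\text{Err}_M$ is the contribution of $\mathcal{R}_M$. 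Using the smoothness of $u\mapsto B^\epsilon(u)$ (valid because $\epsilon>0$), the uniform bound \eqref{prop:estimate-pt}, and the regularization estimate
\[
    \|W_{\delta_M}*\pt_n - \pt_n\|_{L^\infty} \le \delta_M\,\|\partial_x\pt_n\|_{L^\infty}\int|y|W_1(y)dy \le C\delta_M,
\]
together with the trivial $L^1$ bound $\|W_{\delta_M}*\pt_n-\pt_n\|_{L^1}\le C$, I control $\text{Err}_M$ by $C\delta_M$ after absorbing the $\|\partial_x(\pt'-\ph')\|_2^2$ cross-terms into the left-hand side via Young's inequality (as done between equations \eqref{eq:V-MV} and the end of the proof of Proposition \ref{prop:uniqueness}). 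Gr\"onwall's inequality then yields \eqref{prop:eq-estimate-pt-ph}.

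\textbf{Main obstacle.} The principal subtlety is the nonlocality introduced by $W_{\delta_M}*\pt$ in the divergence-form analysis: naively differentiating $B^\epsilon(W_{\delta_M}*\pt)\pt$ produces terms like $(\partial B^\epsilon)(W_{\delta_M}*\pt)\cdot(W_{\delta_M}*\partial_x\pt)\cdot\pt$ that are \emph{not} of the form $A^\epsilon(\pt)\partial_x\pt$, so the coercivity of Proposition \ref{prop:Gamma} does not apply directly. The add-and-subtract trick above isolates the local operator $A^\epsilon(\pt)$, for which coercivity holds, from a nonlocal source term whose smallness is quantified purely by the regularity of $\pt$ and the scale $\delta_M$ of the mollifier.
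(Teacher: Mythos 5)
Your proposal is correct and mirrors the paper's two-step strategy: the paper likewise obtains the uniform bound \eqref{prop:estimate-pt} by noting that convolution with $W_{\delta_M}$ is non-expansive on the parabolic H\"older space (so the Schauder estimate \eqref{eq:est-ussr} and the Picard construction of Proposition \ref{prop:existence} carry over with $\delta_M$-independent constants), and proves \eqref{prop:eq-estimate-pt-ph} by writing the PDE satisfied by $\ph-\pt$ with the coercive leading operator $A^\epsilon(\ph)$, a bounded first/zeroth-order remainder, and an $O(\delta_M)$ source controlled by $\pt - W_{\delta_M}*\pt$, then applying $\Gamma$-coercivity and Gr\"onwall exactly as in Proposition \ref{prop:uniqueness}. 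Your explicit add-and-subtract decomposition is simply the computation the paper cites from \cite[Eq.~(2.8), Lemma~2.6]{jourdain1998propagation}, so the two arguments coincide in substance.
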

\begin{proof}
	The existence of $(\tilde{p}_n)_{1\le n\le N}$ and $(\tilde{X}^{i,M}_t)_{t\ge 0}, 1\le i\le M$, can easily be established by noticing that the mapping $\phi\in C^{1+\alpha/4,2+\alpha/2} \mapsto W_{\delta_M}* u\in  C^{1+\alpha/4,2+\alpha/2}$ is non-expansive for the norm $\|\cdot\|_{C^{1+\alpha/4,2+\alpha/2}}$ and by using the techniques of the proof of Proposition \ref{prop:existence} (see as well \cite[Proposition 2.2]{jourdain1998propagation}). The non-expansivity of the mentioned mapping makes the estimate \eqref{eq:est-ussr} still valid and independent of $\delta_M$, and therefore \eqref{prop:estimate-pt} is justified.

	In order to establish \eqref{prop:eq-estimate-pt-ph}, we shall show that the proof of  \cite[Lemma 2.6]{jourdain1998propagation} carries to parabolic systems. Mimicking the computation of \cite[Equation (2.8)]{jourdain1998propagation}, we can write that the difference $\ph -\pt$ solves the system
	\begin{equation}\label{eq:ph-pt}
		\partial_t(\ph-\pt) = \inv{2}\partial_x[A^\epsilon(\ph)\partial_x(\ph - \pt)] + a_1\partial_x(\ph - \pt) + a_2(\ph-\pt) + \phi.
	\end{equation}
	where the matrix valued functions $a_1(t, x)$ and $a_2(t, x)$ are bounded, depend on $\hat{p}$ and $\tilde{p}$, and the vector valued function $\phi(t, x)$ depends on $\pt - W * \pt$ and its first and second derivatives. Moreover, $\phi$ satisfies
	\begin{equation}
		\sum_{n=1}^N\sup_{x\in\R}|\phi_n(x)| \le C\delta_M,
	\end{equation}
	for some constant $C>0$ independent of $\delta_M$.

	Next, we observe that under Condition \ref{cond:f}, there exist $\Gamma\in S^{++}$ and $\kappa>0$ such that for all $X\in\R^N$
	\begin{equation}
		\langle X,\Gamma A^\epsilon(\ph)X\rangle \ge \kappa\|X\|_2^2.
	\end{equation}

	Like in the proof of Proposition \ref{prop:uniqueness}, set $\ph' = \sG\ph$, $\pt'=\sG\pt$, $A'(\ph)=\sG A^\epsilon(\ph)\Sg$, $a_1'=\sG a_1\Sg$, $a_2'=\sG a_2\Sg$ and $\phi'=\sG\phi$. With \eqref{eq:A'-coercivity}, there exists $\kappa'>0$ such that for all $X\in\R^N$
	\begin{equation}
		\langle X, A'X\rangle \ge \kappa'\|X\|^2_2.
	\end{equation}
	Moreover
	\begin{equation}
		\begin{split}
			\inv{2}\int \|\ph' - \pt'\|_2^2 = &- \inv{2}\int_0^t\int \langle \partial_x(\ph'-\pt'),A'(\ph)\partial_x(\ph'-\pt')\rangle dt\\
			&+ \int_0^t\int \langle \ph'-\pt', a_1'\partial_x(\ph'-\pt')\rangle dt
			+ \int_0^t\int \langle \ph'-\pt', a_2(\ph'-\pt')\rangle dt\\
			&+ \int_0^t\int \langle\ph'-\pt',\phi'\rangle dt.
		\end{split}
	\end{equation}

	Using the boundedness of $a_1'$, $a_2'$ and standard techniques, it is easy to see that for some appropriate constant $C > 0$ and for all $t\in [0,T]$
	\begin{equation}
		\begin{split}
		  &\int \langle \ph'-\pt', a_1'\partial_x(\ph'-\pt')\rangle + \int \langle \ph'-\pt', a_2(\ph'-\pt')\rangle\\
		  &\le \frac{\kappa'}{2}\int \|\partial_x(\hat{p}'-\tilde{p}')\|^2_2 + C\int \|\hat{p}'-\tilde{p}'\|^2_2.
		\end{split}
	\end{equation}

	In the view of the uniform coercivity of $A'$, and $\phi'$, and the estimate
	\begin{equation}
		\int \langle\ph'-\pt',\phi'\rangle\le C\delta_M \sum_{n=1}^N\int (\ph_n + \pt_n) = C\delta_M,
	\end{equation}
	we conclude that there exists some constant $C>0$ such that for all $t\in [0,T]$
	\begin{equation}
		\inv{2}\int \|\ph' - \pt'\|_2^2 \le C\delta_M  + C\int_0^t\inv{2}\int \|\ph' - \pt'\|_2^2dt.
	\end{equation}

	The result is proved by applying Gronwall's inequality and using the obvious inequality  $\|\ph - \pt\|_2\le \|\ph' - \pt'\|_2$, for some constant $C>0$ depending only on $\Gamma$ and independent of $\delta_M$.
\end{proof}

Let $1\le i\le M$.
To simplify the notation, we will drop the subscripts and superscripts in $i$ and $M$, and we denote $\delta=\delta_M$, $W=W_{\delta_M}$, $X = X^i$, $\Xt = \Xt^i$, $\Xh = \Xh^i$, and for $j\neq i$, $X^{j} = X^{j,M}$ and $\Xt^{j} = \Xt^{j,M}$.
Denote $W_{\delta_M}*X_t(x) = \inv{M}\sum_{j=1}^MW_{\delta_M}(x-X^{j}_t)$ and $W_{\delta_M}^f*X_t(x) = \inv{M}\sum_{j=1}^Mf(Y^j)W_{\delta_M}(x-X^j_t)$. Under this notation, the volatility coefficients of $X$, $\Xt$ and $\Xh$ are respectively given by
\begin{equation}
	\Sigma_t := f(Y)\sqrt{\frac{\epsilon +  W * X_t(X_t)}{\epsilon + W^f*X_t(X_t)}}\sigma(t, X_t),
\end{equation}
\begin{equation}
	\Sigmat_t := \sqrt{\frac{\epsilon + f(Y)\sum_{n=1}^NW*\pt_n(t, \Xt_t)}{\epsilon+\sum_{n=1}^Nf(n)W*\pt_n(t, \Xt_t)}}\sigma(t, \Xt_t),
\end{equation}
and
\begin{equation}
	\Sigmah_t := \sqrt{\frac{\epsilon + f(Y)\sum_{n=1}^N\ph_n(t, \Xh_t)}{\epsilon+\sum_{n=1}^Nf(n)\ph_n(t, \Xt_t)}}\sigma(t, \Xh_t).
\end{equation}

Notice immediately that
\begin{equation}
	\sigma_0^2\frac{\inf f}{\sup f}\le \Sigma^2,\Sigmat^2,\Sigmah^2\le\frac{\sup f}{\inf f}\sigma_1^2.
\end{equation}

In the rest of the section, we denote, for notational simplicity, by $C$ any constant depending on $N$, $\epsilon$, $f$, $\sigma_0$, $\sigma_1$, and $|W_1|_\infty$.

The proof of Theorem \ref{thm:chaos} requires the Propositions \ref{prop:X-Xt} and \ref{prop:Xt-Xh} below.

\begin{prop}\label{prop:X-Xt}
	Let $1\le i\le M$. Then for all $T>0$.
	\begin{equation}\label{eq:X-Xt}
		\E{\sup_{0\le t\le T}(X^{i,M}_t-\Xt^{i,M}_t)^2}\le\frac{C}{M}\frac{\exp{(CT\delta_M^{-4})}}{\delta_M^2}.
	\end{equation}
\end{prop}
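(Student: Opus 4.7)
The plan is to couple the two particle systems by driving both with the same $(X_0^i, Y^i, B^i)$, and then to bound the difference $\Delta_t := X^{i,M}_t - \Xt^{i,M}_t$ via It\^o's formula together with BDG. From \eqref{SDE-particles} and \eqref{SDEt},
\begin{equation*}
\Delta_t = -\tfrac{1}{2}\int_0^t(\Sigma_s^2 - \Sigmat_s^2)\,ds + \int_0^t(\Sigma_s - \Sigmat_s)\,dB^i_s.
\end{equation*}
The uniform two-sided bounds on $\Sigma_s,\Sigmat_s$ recorded just after \eqref{SDEh} give $|\Sigma_s^2 - \Sigmat_s^2| \le C|\Sigma_s - \Sigmat_s|$, so Cauchy--Schwarz on the drift and Burkholder--Davis--Gundy on the martingale part yield
\begin{equation*}
\E{\sup_{s\le t}\Delta_s^2} \le C\int_0^t \E{(\Sigma_s - \Sigmat_s)^2}\,ds,
\end{equation*}
and everything reduces to bounding $\E{(\Sigma_s - \Sigmat_s)^2}$.

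Next I would introduce the intermediate coefficient
\begin{equation*}
G_s := \sqrt{\frac{\epsilon + f(Y^i)\,\frac{1}{M}\sum_{j=1}^M W_\delta(\Xt^{i,M}_s - \Xt^{j,M}_s)}{\epsilon + \frac{1}{M}\sum_{j=1}^M f(Y^j)\,W_\delta(\Xt^{i,M}_s - \Xt^{j,M}_s)}}\;\sigma(s, \Xt^{i,M}_s),
\end{equation*}
in which the $X^{j,M}$ have been swapped for $\Xt^{j,M}$ but the empirical averages are retained. The scalar ratio $h^2(\mu_1,\mu_2) := (\epsilon+f\mu_1)/(\epsilon+\mu_2)$ obeys $h^2 \le f_{\max}/f_{\min}$ (since $\mu_2 \ge f_{\min}\mu_1$), and consequently $|\partial_{\mu_1}h^2| + |\partial_{\mu_2}h^2|\le C/\epsilon$ uniformly in $\delta$. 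Combined with $\|W_\delta'\|_\infty = \|W_1'\|_\infty/\delta^2$ and the Lipschitzness of $\sigma$, this produces
\begin{equation*}
|\Sigma_s - G_s|^2 \le \frac{C}{\delta^4}\left(\Delta_s^2 + \frac{1}{M}\sum_{j=1}^M (X^{j,M}_s - \Xt^{j,M}_s)^2\right),
\end{equation*}
and the exchangeability of the particles (both systems have the same functional form and the same i.i.d.\ inputs) collapses the expectation of the empirical sum to $\E{\Delta_s^2}$.

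For $|G_s - \Sigmat_s|^2$ the crucial observation is that in the intermediate system \eqref{SDEt} the drift and diffusion are \emph{deterministic} functions of the fixed density $\pt$ and of each particle's own $(X_0^j, Y^j, B^j)$, so the particles $(\Xt^{j,M})_{j=1}^M$ are independent. Conditionally on $\Xt^{i,M}_s$, the variables $W_\delta(\Xt^{i,M}_s - \Xt^{j,M}_s)$ for $j\neq i$ are therefore i.i.d.\ with conditional mean $\sum_n W_\delta * \pt_n(s,\Xt^{i,M}_s)$ and variance bounded by $\|W_\delta\|_\infty^2 = \|W_1\|_\infty^2/\delta^2$; the same holds with the factor $f(Y^j)$ inserted. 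Applying again the $\delta$-independent Lipschitzness of $h^2$ gives
\begin{equation*}
\E{(G_s - \Sigmat_s)^2} \le \frac{C}{M\delta^2}.
\end{equation*}

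Setting $\phi(t) := \E{\sup_{s\le t}\Delta_s^2}$, the two estimates combine into
\begin{equation*}
\phi(t) \le \frac{CT}{M\delta^2} + \frac{C}{\delta^4}\int_0^t \phi(s)\,ds,
\end{equation*}
and Gronwall's lemma yields $\phi(T) \le \frac{C}{M\delta^2}\exp(CT/\delta^4)$, which is the announced bound. I expect the main obstacle to be the Lipschitz analysis of the diffusion coefficient: a naive estimate $|\partial_{\mu_2}h^2| \le (\epsilon+f\mu_1)/\epsilon^2$ scales like $1/\delta$ because $\mu_1$ can be as large as $\|W_\delta\|_\infty$, which would produce a worse exponential $\exp(CT/\delta^6)$. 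One must exploit the structural relation $\mu_2\ge f_{\min}\mu_1$ (equivalently the uniform upper bound $h^2 \le f_{\max}/f_{\min}$) to obtain $\delta$-independent Lipschitz constants, after which the only residual $\delta$-dependence comes from $\|W_\delta'\|_\infty$ (giving the $\delta^{-4}$ in the exponential) and from $\|W_\delta\|_\infty$ in the LLN variance (giving the $\delta^{-2}$ prefactor).
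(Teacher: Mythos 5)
Your argument is correct and follows essentially the same path as the paper: reduce to $\E{(\Sigma_s-\Sigmat_s)^2}$ via BDG, split off a sampling-error term (your $G_s$ vs.\ $\Sigmat_s$, the paper's term $(\mathrm{III})$) estimated by independence of the $\Xt^{j,M}$ at rate $1/(M\delta^2)$, a Lipschitz-in-positions term (your $\Sigma_s$ vs.\ $G_s$, the paper's $(\mathrm{I})+(\mathrm{II})$) estimated by $\|W_\delta'\|_\infty\lesssim\delta^{-2}$ and exchangeability, then Gronwall. The only organizational difference is that you swap both the evaluation point and the averaging particles from $X$ to $\Xt$ in one step, whereas the paper does it coordinate by coordinate; this is immaterial. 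Your emphasis on $\epsilon + \mu_2 \ge \min(1,f_{\min})(\epsilon+\mu_1)$ (equivalently $h^2\le C$) to obtain $\delta$-independent Lipschitz constants of the ratio is exactly what the paper exploits in its chain of fraction manipulations, and you are right that without it one would pick up a spurious $\delta^{-6}$ in the exponent.
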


\begin{proof}
	According to the Burkholder–Davis–Gundy inequality, for any $s\in [0,T]$
	\begin{equation}\label{eq:BDG}
		\begin{split}
			\E{\sup_{0\le t\le s} (X_t -\Xt_t)^2}&\le C\E{\langle X - \Xt\rangle_s}\\
			&=C\int_0^s\E{(\Sigma_t-\Sigmat_t)^2}dt.
		\end{split}
	\end{equation}

	For $x\in\R$ and $(U^j)_{1\le j\le M}\in\R^M$, we abbreviate  $\inv{M}\sum_{j=1}^M W(x - U^j)$ by $W*U(x)$ and $\inv{M}\sum_{j=1}^Mf(Y^j)W(x-U^j)$ by $W^{f}*U(x)$.

	The boundedness $\Sigma_t$, $\Sigmat_t$, $\sigma$ and $\partial_x\sigma$ gives for all $t\in [0, T]$
	\begin{equation}\label{eq:sigma-sigmat}
		\begin{split}
			|\Sigma_t - \Sigmat_t|&\le C|\Sigma_t^2-\Sigmat_t^2|\\
			&\le C\left|\frac{\epsilon + W^{f}*X_t(X_t)}{\epsilon + W*X_t(X_t)} - \frac{\epsilon + \sum_{n=1}^N f(n)W*\pt_n(t, \Xt_t)}{\epsilon + \sum_{n=1}^N W*\pt_n(t, \Xt_t)}\right|\\
			&+ C|\sigma(t, X_t)-\sigma(t, \Xt_t)|\\
			&\le C\left|\frac{\epsilon + W^{f}*X(X_t)}{\epsilon + W*X_t(X_t)} - \frac{\epsilon + W^{f}*X_t(X_t)}{\epsilon + \sum_{n=1}^NW*\pt_n(t, \Xt_t)}\right|\\
			&+ C\left|\frac{\epsilon + W^{f}*X(X_t)}{\epsilon + \sum_{n=1}^NW*\pt_n(t, \Xt_t)} - \frac{\epsilon + \sum_{n=1}^Nf(n)W*\pt_n(t,\Xt_t)}{\epsilon + \sum_{n=1}^NW*\pt_n(t, \Xt_t)}\right|\\
			&+ C|X_t-\Xt_t|\\
			&\le C\left| W*X_t(X_t) - \sum_{n=1}^N W*\pt_n(t, \Xt_t)\right|\\
			&+
			C\left| W^{f}*X_t(X_t) - \sum_{n=1}^Nf(n)W*\pt_n(t,\Xt_t)\right| + C|X_t-\Xt_t|.
		\end{split}
	\end{equation}

	Let $t\in [0,T]$. After estimating the term $\E{\left| W^{f}*X_t(X_t) - \sum_{n=1}^Nf(n)W*\pt_n(t,\Xt_t)\right|^2}$, we will apply the estimate with $f=1$ to treat the term $\E{\left| W*X_t(X_t) - \sum_{n=1}^N W*\pt_n(t, \Xt_t)\right|^2}$.

	Upper-bound
	\begin{equation}
		\begin{split}
			\left|W^{f}*X_t(X_t) - \sum_{n=1}^Nf(n)W*p_n(t, \Xt_t)\right|
			&\le (\textrm{I}) + (\textrm{II}) + (\textrm{III}),
		\end{split}
	\end{equation}
	where each term $(\textrm{I}), (\textrm{II})$ and $(\textrm{III})$ is respectively defined by
	\begin{equation}
		(\textrm{I}) :=  \left|W^{f}*X_t(X_t) - W^{f}*X_t(\Xt_t)\right|,
	\end{equation}
	\begin{equation}
		(\textrm{II}) := \left| W^{f}*X_t(\Xt_t)- W^{f}*\Xt_t(\Xt_t)\right|,
	\end{equation}
	and
	\begin{equation}
		(\textrm{III}) := \left|W^{f}*\Xt_t(\Xt_t) - \sum_{n=1}^Nf(n)W*\pt_n(t, \Xt_t)\right|.
	\end{equation}

	By using $\sup_{x\in\R}|W'(x)| \le \frac{C}{\delta^2}$, we find
	\begin{equation}\label{eq:I}
		\begin{split}
			\E{(\textrm{I})^2} &= \E{\left(\inv{M}\sum_{j=1}^M\E{f(Y^j)(W(X_t-X^j_t) - W(\Xt_t-X^j_t))}\right)^2}\\
			&\le \frac{C}{\delta^4}\E{|X_t-\Xt_t|^2}.
		\end{split}
	\end{equation}

	Likewise
	\begin{equation}\label{eq:II}
		\begin{split}
			\E{(\textrm{II})^2} &= \E{\left(\inv{M}\sum_{j=1}^Mf(Y^j)(W(\Xt_t-X^j_t) - W(\Xt_t-\Xt^j_t))\right)^2}\\
			&\le \frac{C}{\delta^4M}\E{\left(\sum_{j=1}^M|X^j_t-\Xt^j_t|^2\right)}\\
			&\le \frac{C}{\delta^4}\E{|X_t-\Xt_t|^2},
		\end{split}
	\end{equation}
	where we used that by symmetry $\E{|X^j_t-\Xt^j_t|^2} = \E{|X_t-\Xt_t|^2}$ for all $j=1,\dots,M$.

	By independence of the $(\Xt^j_t)_{1\le j\le M}$ and using that
	\begin{equation}
		\sum_{n=1}^Nf(n)W*\pt_n(t, x) = \E{f(Y^j)W(x-\Xt^j_t)},\ 1\le j\neq i\le N,
	\end{equation}
	we get
	\begin{equation}
		\begin{array}{c}
			\E{(\textrm{III})^2} =  \displaystyle\int \E{\left(\inv{M}\sum_{j\neq i}f(Y^j)W(x-\Xt^j_t)  - \E{f(Y^j)W(x-\Xt^j_t) }\right)^2} \\
			\times\Prob{\Xt_t\in dx}.
		\end{array}
	\end{equation}

	Therefore
	\begin{equation}\label{eq:III-1}
		\begin{split}
			\E{(\textrm{III})^2} &\le C\int \E{\left(\inv{M}\sum_{j\neq i}\left(f(Y^j)W(x-\Xt^j_t)  - \E{f(Y^j)W(x-\Xt^j_t) }\right)\right)^2}\Prob{\Xt_t\in dx}\\
			&+ \frac{C}{M^2}\int \E{f(Y)W(x-\Xt_t) }^2\Prob{\Xt_t\in dx}.
		\end{split}
	\end{equation}

	Making use of the bound $\sup_{x\in \R}|W(x)|\le \frac{C}{\delta}$, we see that the second integral in the right-hand side of \eqref{eq:III-1} is dominated by
	\begin{equation}
		\begin{split}
			\int \E{f(Y)W(x-\Xt_t) }^2\Prob{\Xt_t\in dx} &\le C\int W(x-y)^2\Prob{\Xt_t\in dx}\Prob{\Xt_t\in dy}\\
			&\le \frac{C}{\delta^2}.
		\end{split}
	\end{equation}

	In estimating the first integral of \eqref{eq:III-1}, we utilize the independence of the $(\tilde{X}^j_t)_{1 \le j\le N}$, for all $x\in\R$, which gives
	\begin{equation}\label{eq:III-last}
		\begin{split}
			& \E{\left(\inv{M}\sum_{j\neq i}\left(f(Y^j)W(x-\Xt^j_t) - \E{f(Y^j)W(x-\Xt^j_t)} \right)\right)^2}\\
			&= \Var{\inv{M}\sum_{j\neq i}f(Y^j)W(x-\Xt^j_t)}\\
			&= \frac{M-1}{M^2}\Var{f(Y)W(x-\Xt_t)}\\
			&\le \frac{C}{M\delta^2}.
		\end{split}
	\end{equation}

	Putting together \eqref{eq:I}, \eqref{eq:II} and \eqref{eq:III-1}-\eqref{eq:III-last}, we infer
	\begin{equation}
		\E{\left|W^{f}*X_t(X_t) - \sum_{n=1}^Nf(n)W*\pt_n(t,\Xt_t)\right|^2}\le \frac{C}{M\delta^2} + \frac{C}{\delta^4}\E{|X_t-\Xt_t|^2}.
	\end{equation}

	Taking $f = 1$ leads to the same estimate for $\E{\left|W*X_t(X_t) - \sum_{n=1}^NW*\pt_n(t,\Xt_t)\right|^2}$.

	If we insert this in \eqref{eq:sigma-sigmat} and use \eqref{eq:BDG}, we obtain
	\begin{equation}
		\E{\sup_{0\le s\le t}(X_s-\Xt_s)^2} \le \inv{M}\frac{C}{\delta^2} + \frac{C}{\delta^4}\int_0^t\E{\sup_{0\le s\le r}(X_s-\Xt_s)^2}dr.
	\end{equation}

	Applying Gronwall's inequality proves \eqref{eq:X-Xt} and concludes the proof of the proposition.
\end{proof}

\begin{prop}\label{prop:Xt-Xh}
	Let $1\le i\le M$. Then for some constant $C>0$ independent of $\delta$
	\begin{equation}
		\E{\sup_{0\le t\le T}|\Xt^{i,M}_t - \Xh^i_t|^2} \le C\delta_M.
	\end{equation}
\end{prop}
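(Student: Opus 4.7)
The plan is a coupling argument via the Burkholder-Davis-Gundy (BDG) inequality, exploiting that $\Xt^{i,M}$ and $\Xh^i$ are driven by the same Brownian motion $B^i$, start from the same $X^i_0$, and share the same label $Y^i$. Writing
\begin{equation*}
\Xt^{i,M}_t - \Xh^i_t = \int_0^t (\Sigmat_s - \Sigmah_s)\,dB^i_s - \inv{2}\int_0^t (\Sigmat_s^2 - \Sigmah_s^2)\,ds,
\end{equation*}
BDG on the martingale part and Cauchy-Schwarz on the finite-variation part, together with the uniform upper and lower bounds on $\Sigmat$ and $\Sigmah$ (which let us dominate $|\Sigmat^2-\Sigmah^2|$ by $C|\Sigmat-\Sigmah|$), would give
\begin{equation*}
\E{\sup_{0\le s\le t}|\Xt^{i,M}_s - \Xh^i_s|^2} \le C \int_0^t \E{|\Sigmat_r - \Sigmah_r|^2}\,dr.
\end{equation*}

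Next, the ratios inside the square roots defining $\Sigmat$ and $\Sigmah$ are uniformly bounded above and away from $0$ thanks to the uniform-in-$M$ estimate $\sum_n\|\pt_n\|_\infty \le C$ coming from \eqref{prop:estimate-pt} and the analogous bound for $\ph$ from Theorem \ref{thm:existence-uniqueness}. Lipschitz continuity of $\sqrt{\cdot}$ on a bounded interval, of the rational map $(a,b)\mapsto (\epsilon+f(Y^i)b)/(\epsilon+a)$ with denominator bounded away from $0$, and of $\sigma$, then reduces the task to controlling $\E{|W_{\delta_M}*\pt_n(s,\Xt_s) - \ph_n(s,\Xh_s)|^2}$ for each $1\le n\le N$, together with $\E{|\Xt_s-\Xh_s|^2}$. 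I would split the density difference by the triangle inequality into three pieces. First, a mollification error $|W_{\delta_M}*\pt_n(s,\Xt_s) - \pt_n(s,\Xt_s)|$, which is $O(\delta_M^2)$ pointwise by second-order Taylor expansion (using $\int W_1 = 1$, $\int xW_1(x)dx = 0$, and the uniform bound on $\partial_{xx}\pt_n$ from \eqref{prop:estimate-pt}). Second, a density difference $|\pt_n(s,\Xt_s)-\ph_n(s,\Xt_s)|$: taking expectation and using that the law of $\Xt_s$ has density $\sum_m\pt_m(s,\cdot)\in L^\infty$ uniformly in $s$ and $M$, this is controlled in square by $C\|\pt_n(s,\cdot)-\ph_n(s,\cdot)\|_{L^2}^2 \le C\delta_M$ thanks to \eqref{prop:eq-estimate-pt-ph}. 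Third, a spatial modulus $|\ph_n(s,\Xt_s)-\ph_n(s,\Xh_s)| \le \|\partial_x\ph_n\|_\infty |\Xt_s-\Xh_s|$ using the regularity of $\ph$ from Theorem \ref{thm:existence-uniqueness}.

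Combining the three pieces would yield $\E{|\Sigmat_s-\Sigmah_s|^2} \le C\delta_M + C\,\E{|\Xt_s-\Xh_s|^2}$. Inserting this back into the BDG bound and applying Gronwall's inequality on $[0,T]$ delivers $\E{\sup_{0\le t\le T}|\Xt^{i,M}_t-\Xh^i_t|^2} \le C\delta_M e^{CT}$, which is precisely the claimed bound. Note that, unlike in the estimate of Proposition \ref{prop:X-Xt}, no factor $1/M$ or $1/\delta_M$ enters the Gronwall constant because here we are comparing two decoupled McKean-Vlasov-type flows rather than particle and limit systems, so the empirical concentration terms are absent.

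The main obstacle I anticipate lies in the density-difference step: upgrading the $L^2$ estimate \eqref{prop:eq-estimate-pt-ph} into a pointwise-in-expectation bound. This conversion relies crucially on the uniform-in-$M$ $L^\infty$ boundedness of the marginal density $\sum_m\pt_m(t,\cdot)$, which is itself ensured by the uniform parabolic Schauder estimate \eqref{prop:estimate-pt}. A secondary care point is maintaining uniformity of all constants $C$ in both $M$ and $\delta_M$ throughout the Lipschitz reductions of the rational function, so that the final Gronwall argument genuinely delivers $O(\delta_M)$ and not something coarser.
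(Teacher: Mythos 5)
Your proposal is correct and follows essentially the same route as the paper: BDG/Gronwall coupling, Lipschitz reduction of the diffusion coefficients using the uniform ellipticity and the uniform Schauder bound on $\pt$, a three-term triangle-inequality split of $W_{\delta_M}*\pt_n(\Xt_t)-\ph_n(\Xh_t)$ into a mollification error, a density-difference term handled via the $L^2$ estimate \eqref{prop:eq-estimate-pt-ph} together with the bounded marginal density of $\Xt_t$, and a spatial-modulus term. The only cosmetic differences are that you mollify $\pt$ and exploit $\int xW_1=0$ for an $O(\delta_M^2)$ Taylor bound, whereas the paper mollifies $\ph$ and settles for $O(\delta_M)$; in both cases the $O(\delta_M)$ density-difference term dominates and Gronwall yields the claimed $C\delta_M$.
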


\begin{proof}
	In the same manner of \eqref{eq:sigma-sigmat}, for some constant $C>0$
	\begin{equation}
		|\Sigmat_t - \Sigmah_t|
		\le \frac{C}{\delta}\sum_{n=1}^N\left|W*\pt_n(t,\Xt_t) - \ph_n(t,\Xh_t)\right| + C|\Xt_t-\Xh_t|.
	\end{equation}

	For all $t\in [0, T]$
	\begin{equation}\label{eq:Xt-Xh-1}
		\begin{split}
			\left|W*\pt_n(t,\Xt_t) - \ph_n(t,\Xh_t)\right|&\le
			\left|W*\ph_n(t,\Xh_t) - \ph_n(t,\Xh_t)\right|\\
			&+ \left|W*\ph_n(t,\Xt_t) - W*\ph_n(t,\Xh_t)\right|\\
			&+ \left|W*\pt_n(t,\Xt_t) - W*\ph_n(t,\Xt_t)\right|.
		\end{split}
	\end{equation}

	Next, using the boundedness of the gradient of $\ph_n$, we estimate the  first two terms of the left-hand side of \eqref{eq:Xt-Xh-1} as follows
	\begin{equation}
		\begin{split}
			\left|W*\ph_n(t,\Xh_t) - \ph_n(t,\Xh_t)\right| &\le \sup_{x\in \R}\int W_1(y)|\ph_n(t, x - \delta y) - \ph_n(t,x)|dy\\
			&\le \delta\|\ph_n\|_{C^{1+\alpha/2,2+\alpha}}\int |y|W_1(y)dy,
		\end{split}
	\end{equation}
	and
	\begin{equation}
		\left|W*\ph_n(t,\Xt_t) - W*\ph_n(t,\Xh_t)\right|\le \|\ph_n\|_{C^{1+\alpha/2,2+\alpha}}|\Xt_t-\Xh_t|.
	\end{equation}

	Therefore, there exists a constant $C>0$ independent of $\delta$ such that
	\begin{equation}
		\left|W*\ph_n(t,\Xh_t) - \ph_n(t,\Xh_t)\right| + \left|W*\ph_n(t,\Xt_t) - W*\ph_n(t,\Xh_t)\right|\le C(\delta + |\Xt_t - \Xh_t|).
	\end{equation}

	For the last term of \eqref{eq:Xt-Xh-1}, write
	\begin{equation}
		\begin{split}
			\E{\left|W*\pt_n(t,\Xt_t) - W*\ph_n(t,\Xt_t)\right|^2} &= \sum_{m=1}^N\int |W*(\pt_n - \ph_n)|^2p_m\\
			&\le \|p\|_{(C^{1+\alpha/4,2+\alpha/2})^N}\int |W*(\pt_n - \ph_n)|^2\\
			&\le  \|p\|_{(C^{1+\alpha/4,2+\alpha/2})^N}\int |\pt_n - \ph_n|^2\\
			&\le  \|p\|_{(C^{1+\alpha/4,2+\alpha/2})^N}\delta,
		\end{split}
	\end{equation}
	where we used Proposition \ref{prop:estimate-pt-ph}.

	Combining the previous estimate and \eqref{eq:Xt-Xh-1} we deduce the existence of $C>0$ independent of $\delta$ such that
	\begin{equation}
		\E{|\sigmat_t - \sigmah_t|^2} \le C\left(\delta^2 + \E{|\Xt_t-\Xh_t|^2}\right).
	\end{equation}

	Burkholder–Davis–Gundy inequality and Gronwall's inequality yield
	\begin{equation}
		\E{\sup_{0\le t\le T}|\Xt_t - \Xh_t|^2} \le C\delta.
	\end{equation}
\end{proof}

We are now ready for the proof of Theorem \ref{thm:chaos}.
\begin{proof}[Proof of Theorem \ref{thm:chaos}]
	According to Propositions \ref{prop:X-Xt} and \ref{prop:Xt-Xh} above, there exists a constant $C>0$ such that for all $1\le i\le N$
	\begin{equation}
		\E{\sup_{0\le t\le T}\left(X^{i,M}_t - \Xh^{i, M}_t\right)^2} \le \frac{C}{M}\frac{\exp{(CT\delta_M^{-4})}}{\delta_M^2} + C\delta_M.
	\end{equation}

	It is enough to choose $(\delta_M)_{M\ge 1}$ such that $\lim_{M\to \infty}\delta_M = 0$ and
	\begin{equation}
		\lim_{M\to \infty} \inv{M}\frac{\exp{(CT\delta_M^{-4})}}{\delta_M^2} = 0.
	\end{equation}
\end{proof}

\bibliographystyle{plain}
\bibliography{bib}

\end{document}